\tikzset{every picture/.style={line width=0.75pt}} 
\newtheorem{theorem}{Theorem}[section]
\newtheorem{lemma}[theorem]{Lemma}
\newtheorem{corollary}[theorem]{Corollary}
\newtheorem{example}[theorem]{Example}
\newtheorem{proposition}[theorem]{Proposition}
\theoremstyle{definition} 
\newtheorem{remark}[theorem]{Remark}
\newtheorem{defin}[theorem]{Definition}
\title{Some Computational Results on Koszul-Vinberg Cochain Complexes}
\author{Hanwen Liu $^{1,2}$}
\thanks{$^1$ Corresponding author.}
\thanks{$^2$ Mathematics Institute, University of Warwick, Coventry, CV4 7AL, UK. \textsl{Email: hanwen.liu@warwick.ac.uk.}}
\author{Jun Zhang $^{3,4}$}
\thanks{$^3$ University of Michigan, Ann Arbor, MI 48109, USA. \textsl{Email: junz@umich.edu}.}
\thanks{$^4$ Shanghai Institute for Mathematics and Interdisciplinary Sciences, Shanghai, China.  \textsl{Email: junz@simis.cn}.}
\begin{document}

\begin{abstract}
    An affine connection is said to be flat if its curvature tensor vanishes identically. Koszul-Vinberg (KV for abbreviation) cohomology has been invoked to study the deformation theory of flat and torsion-free affine connections on tangent bundle. In this Note, we compute explicitly the differentials of various specific KV cochains, and study their relation to classical objects in information geometry, including deformations associated with projective and dual-projective transformations of a flat and torsion-free affine connection. As an application, we also give a simple yet non-trivial example of a KV algebra of which second cohomology group does not vanish.
\end{abstract}

\maketitle
\textit{Keywords}: locally flat manifolds, Koszul-Vinberg cochain complexes, conformal and projective transform, exterior covariant derivative.

\section{Introduction and Backgrounds}
A differentiable manifold $M$ is called a locally flat manifold if it admits a flat and torsion-free connection $\nabla$ on its tangent bundle $TM\rightarrow M$, and in this case the connection $\nabla$ is also termed a locally flat structure on $M$. In information geometry, a locally flat structure $\nabla$ along with a Riemannian metric $g$ gives rise to another flat conjugate connection $\nabla^*$. The metric $g$ becomes a Hessian metric if and only if the conjugate connection is also torsion-free. 

The deformation of locally flat structures has been studied by means of the theory of 
Koszul-Vinberg cohomology \cite{1}. Boyom and Byande applied the theory of KV cohomology to deformation of locally flat structures. In \cite{11} they relate symmetric zeros of Maurer-Cartan polynomial map to the deformation theory of KV algebra of a flat torsion-free connection $\nabla$. As KV cohomology is so far the main algebraic topological tool that is utilized in information geometry, further understanding of its properties seems of interest.  

Our note investigates KV cohomology by explicitly calculate KV differential of specific geometric objects.
After a brief review of KV algebra of a flat torsion-free connection and its cohomology, we provide a few results related to the first cochain group and the second cochain group of KV cohomology.  As specific applications, characterizations of projective transformation and dual-projective transformation to a flat and torsion-free connection are both described in terms of a vanishing conndition on KV differential of their associated 2-cochains. Finally, to show that KV cohomology is not vaccuous, we construct an example of KV cohomology with non-vanishing second cochain group for the simplest case of a planar domain with its usual Euclidean metric. 

The Appendix provided a review of exterior covariant derivative and de Rham cohomology twisted by a local system, to allow uninitiated readers a comparison with and appreciation of KV cohomology.

\section{Brief Review of Koszul-Vinberg Cohomology}
Let $M$ be an arbitrary but fixed smooth manifold, and let
$$
\nabla\colon \Gamma(T M) \times \Gamma(T M) \longrightarrow \Gamma(T M), \quad(X, Y) \longmapsto \nabla_X Y
$$
be a flat torsion-free affine connection on the tangent bundle $T M \rightarrow M$, where $\Gamma(T M)$ is the $\mathbb{R}$-vector space of $C^{\infty}$ vector fields on $M$.
\begin{defin}\label{defin_2_1}
    The $\mathbb{R}$-algebra $(\Gamma(T M), \nabla)$ is said to be the KV algebra of $\nabla$.
\end{defin}

To simplify the notation, throughout this article we denote $A:=(\Gamma(T M), \nabla)$.

Recall that a vector field $Z \in \Gamma(T M)$ is termed a Jacobi element of $A$, if $\nabla_X \nabla_Y Z=\nabla_{\nabla_X Y} Z$ holds for all $X, Y \in \Gamma(T M)$.

\begin{defin}\label{defin_2_3}
    Define $C^0(A)$ to be the subspace of Jacobi elements of $A$, and for each integer $n \geqslant 1$ define $C^n(A):=\operatorname{Hom}_{\mathbb{R}}\left(\Gamma(T M)^{\otimes n}, \Gamma(T M)\right)$, or equivalently, the collection of multilinear maps from the $n$-fold Cartesian product $\Gamma(T M) \times \cdots \times \Gamma(T M)$ to $\Gamma(T M)$.
\end{defin}

In addition, we shall introduce the following important notation:

For each $X \in \Gamma(T M)$ and each $\theta \in C^n(A)$ with $n \geqslant 1$, let $\nabla_X \theta$ be the element of $C^n(A)$ satisfying $(\nabla_X \theta)\left(X_1, \ldots, X_n\right):=\nabla_X\left(\theta\left(X_1, \ldots, X_n\right)\right)-\left[\theta\left(\nabla_X X_1, X_2, \ldots, X_n\right)+\cdots+\theta\left(X_1, \ldots, X_{n-1}, \nabla_X X_n\right)\right]$.

For any integer $n \geqslant 1$, notice that if $\theta \in C^n(A)$ is a $(1, n)$-tensor on $M$ then the notation above recovers the Leibniz rule.
\begin{defin}\label{defin_2_5}
    Define endomorphism $d_{\mathrm{KV}}\colon\bigoplus_{i=0}^{\infty} C^i(A) \longrightarrow \bigoplus_{i=0}^{\infty} C^i(A)$ of degree +1 of the graded $\mathbb{R}$-vector space $\bigoplus_{i=0}^{\infty} C^i(A)$ as it follows:
    \begin{enumerate}
        \item[$(i)$] for each $X \in C^0(A)$, define $d_{\mathrm{KV}} X \in C^1(A)$ by $\left(d_{\mathrm{KV}} X\right)(Y):=[X, Y]$;
        \item[$(ii)$] for each $\theta \in C^n(A)$ with $n \geqslant 1$, define $d_{\mathrm{KV}} \theta \in C^{n+1}(A)$ by
        $$
        \left(d_{\mathrm{KV}} \theta\right)\left(X_1, \ldots, X_{n+1}\right):=\sum_{i=1}^n(-1)^i\left[\left(\nabla_{X_i} \theta\right)\left(X_1, \ldots, \hat{X}_i, \ldots, X_{n+1}\right)+\nabla_{\theta\left(X_1, \ldots, \hat{X}_i, \ldots, X_n, X_i\right)} X_{n+1}\right]
        $$
        where the hat on $\hat{X}_i$ indicates that the $X_i$ term is omitted.
    \end{enumerate}
\end{defin}
\begin{lemma}\label{lemma_2_6}
    It holds that $d_{\mathrm{KV}}{ }\circ d_{\mathrm{KV}}=0$.
\end{lemma}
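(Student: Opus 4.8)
The plan is to verify $d_{\mathrm{KV}}\circ d_{\mathrm{KV}}=0$ by direct expansion, organized around the algebraic identities forced by flatness and torsion-freeness. The first thing I would record is that these two hypotheses are together equivalent to the \emph{left-symmetry} (pre-Lie) identity for $A$: writing the associator as $\operatorname{as}(X,Y,Z):=\nabla_X\nabla_Y Z-\nabla_{\nabla_X Y}Z$, flatness in the form $\nabla_X\nabla_Y Z-\nabla_Y\nabla_X Z=\nabla_{[X,Y]}Z$ combined with torsion-freeness $[X,Y]=\nabla_X Y-\nabla_Y X$ yields $\operatorname{as}(X,Y,Z)=\operatorname{as}(Y,X,Z)$. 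This symmetry of the associator in its first two slots is the single identity driving essentially every cancellation, so I would isolate it as a preliminary lemma, together with the consequence that $[\,\cdot\,,\,\cdot\,]$ is a Lie bracket.

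Second, I would lift flatness to the induced operators on cochains, establishing that for $\theta\in C^n(A)$ one has $\nabla_X\nabla_Y\theta-\nabla_Y\nabla_X\theta=\nabla_{[X,Y]}\theta$. This is proved by expanding both sides via the defining formula for $\nabla_X\theta$: the curvature contributions on the output slot and on each of the $n$ input slots vanish by flatness, the ``cross'' terms in which two distinct input slots are differentiated are symmetric in $X,Y$ and cancel, and the surviving terms reassemble into $\nabla_{[X,Y]}\theta$. This identity is what lets me reorganize the iterated-derivative terms produced by $d_{\mathrm{KV}}^2$.

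With these in hand I would split into the two degree regimes. For $X\in C^0(A)$ the computation is short: $d_{\mathrm{KV}}X=[X,\,\cdot\,]$, and expanding $(d_{\mathrm{KV}}^2 X)(Y_1,Y_2)$ while using torsion-freeness to rewrite every bracket as $\nabla_U V-\nabla_V U$ produces exactly two groups of terms — one that vanishes precisely because $X$ is a Jacobi element, i.e. $\nabla_{Y_1}\nabla_{Y_2}X=\nabla_{\nabla_{Y_1}Y_2}X$, and one that vanishes by flatness. This is the only place the Jacobi condition of Definition \ref{defin_2_3} enters, and it explains why $C^0(A)$ is defined as the space of Jacobi elements rather than all of $\Gamma(TM)$.

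For $\theta\in C^n(A)$ with $n\ge 1$ I would set $\phi:=d_{\mathrm{KV}}\theta$, expand $(d_{\mathrm{KV}}\phi)(X_1,\dots,X_{n+2})$ from Definition \ref{defin_2_5}$(ii)$, and substitute the definition of $\phi$. The resulting terms fall into recognizable families: iterated-derivative terms $(\nabla_{X_i}\nabla_{X_j}\theta)(\dots)$, which I would reorganize using the flat-cochain identity from the previous step, the leftover $\nabla_{[X_i,X_j]}\theta$ contributions being reabsorbed through torsion-freeness; ``inner'' terms where $\nabla$ is applied to an argument of the form $\theta(\dots,X_i)$ or to the distinguished last slot $X_{n+2}$, which recombine through the left-symmetry identity; and mixed terms coupling the two summands of the differential. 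The heart of the argument — and the step I expect to be the main obstacle — is the bookkeeping in the last two families: there are on the order of $n^2$ terms, the final argument $X_{n+2}$ plays a distinguished, only partially antisymmetrized role (this complex is not Chevalley--Eilenberg, so the alternation is incomplete), and the cancellations are not term-by-term but require grouping triples and invoking $\operatorname{as}(X,Y,Z)=\operatorname{as}(Y,X,Z)$ to collapse each associator into a symmetric expression that the signs $(-1)^{i+j}$ then annihilate. Disciplined index tracking, rather than any new idea, is what makes this case close, so I would fix a labelling convention for the slots at the outset and verify vanishing family by family.
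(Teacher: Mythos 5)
The paper does not actually prove this lemma; it defers entirely to Boyom \cite{1}. Your proposal therefore takes a genuinely different route by attempting a self-contained direct verification, and the skeleton you set up is the right one: the equivalence of flatness plus torsion-freeness with the left-symmetry (pre-Lie) identity $\operatorname{as}(X,Y,Z)=\operatorname{as}(Y,X,Z)$ is exactly the identity under which the KV differential squares to zero, your lifted identity $\nabla_X\nabla_Y\theta-\nabla_Y\nabla_X\theta=\nabla_{[X,Y]}\theta$ on $C^n(A)$ is correct and genuinely needed, and your degree-zero computation is complete and correct --- it is precisely the content of Example \ref{example_3_2}, and it correctly isolates the Jacobi condition as the reason $C^0(A)$ is defined as in Definition \ref{defin_2_3} rather than as all of $\Gamma(TM)$.

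However, for $n\geq 1$ your argument has a genuine gap: the central cancellation is described but never performed. You classify the terms of $(d_{\mathrm{KV}}(d_{\mathrm{KV}}\theta))(X_1,\dots,X_{n+2})$ into families and assert that they vanish family by family after ``disciplined index tracking,'' but that tracking is the entire content of the lemma, and it is not routine. In particular, substituting $\phi=d_{\mathrm{KV}}\theta$ into Definition \ref{defin_2_5}$(ii)$ produces terms of the form $\nabla_{\phi(X_1,\dots,\hat X_i,\dots,X_{n+1},X_i)}X_{n+2}$, whose subscript itself expands into a sum containing both $(\nabla_{X_j}\theta)(\cdots)$ and $\nabla_{\theta(\cdots)}X_i$; the resulting compositions $\nabla_{\nabla_{\theta(\cdots)}X_i}X_{n+2}$ must be paired against the terms $\nabla_{\theta(\cdots)}\big(\nabla_{X_i}X_{n+2}\big)$ coming from the Leibniz expansion of $(\nabla_{X_i}\phi)$, and it is only this specific pairing that produces associators to which left-symmetry applies. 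Because the last slot is distinguished and the alternation is only partial, there is no symmetry argument that lets you wave these pairings through; they must be exhibited. As written, the proposal is a correct plan with the correct ingredients, but not yet a proof; to close it you must either carry out the $n\geq 1$ expansion explicitly (at least displaying the paired families and the sign bookkeeping) or, as the paper does, cite \cite{1} for that computation.
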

\begin{proof}
    See for example \cite{1} for a detailed proof.
\end{proof}
\begin{defin}\label{defin_2_7}
    The differential graded $\mathbb{R}$-vector space $\left(\bigoplus_{i=0}^{\infty} C^i(A), d_{\mathrm{KV}}\right)$ is said to be the KV cochain complex of $A$, and the elements of $C^n(A)$ are termed KV $n$-cochains.
\end{defin}

We shall now recall a classical application of the cohomology theory of KV cochain complexes \cite{11}. The following definitions and theorem are from their work. 
\begin{defin}\label{defin_2_8}
    A family $\left\{\nabla^t \in C^2(A) \mid t \in \mathbb{R}\right\}$ of flat torsion-free affine connections on the tangent bundle $T M \rightarrow M$ is said to be a smooth deformation of $\nabla$, if $\nabla^0=\nabla$ and for all $X, Y \in \Gamma(T M)$ the mapping
    $$
    M \times \mathbb{R} \longrightarrow T M, \quad(p, t) \longmapsto\left(\nabla_X^t Y\right)(p)
    $$
    is smooth.
\end{defin}
\begin{defin}\label{defin_2_9}
    A smooth deformation $\left\{\nabla^t \in C^2(A) \mid t \in \mathbb{R}\right\}$ is said to be trivial, if there exists a one-parameter subgroup
    $$
    \phi\colon \mathbb{R} \longrightarrow \operatorname{Diff}(M), \quad t \longmapsto \phi^t
    $$
    of the group of diffeomorphisms of $M$, such that
    $$
    \nabla_X^t Y=d \phi^t\left(\nabla_{d \phi^{-t}(X)} d \phi^{-t}(Y)\right)
    $$
    for all $X, Y \in \Gamma(T M)$ and $t \in \mathbb{R}$.
\end{defin}

In \cite{1}, the following rigidity theorem is proved.
\begin{theorem}\label{theorem_2_10}
    Suppose that $M$ is compact. If the second cohomology group of $\left(\bigoplus_{i=0}^{\infty} C^i(A), d_{\mathrm{KV}}\right)$ vanishes, then all the smooth deformations of $\nabla$ are trivial.
\end{theorem}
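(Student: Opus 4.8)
The plan is to run the classical deformation-theoretic argument of Gerstenhaber and Nijenhuis--Richardson, transported to the KV setting, in which the hypothesis that $H^2$ vanishes forces the obstruction at each order to be a coboundary that is then absorbed by a gauge transformation coming from a diffeomorphism. First I would expand a given smooth deformation as a formal power series in $t$. Writing $\nabla^t_X Y = \nabla_X Y + \sum_{k \geqslant 1} t^k\, \theta_k(X,Y)$, each Taylor coefficient $\theta_k$ is a difference of connections and hence a genuine $C^\infty(M)$-bilinear element of $C^2(A)$. The requirement that every $\nabla^t$ be flat and torsion-free is equivalent to the KV (left-symmetry) identity for the product $\nabla^t$, and expanding that identity order by order produces a hierarchy of equations in $C^3(A)$: the order-$t$ equation reads $d_{\mathrm{KV}}\theta_1 = 0$, while the order-$t^k$ equation takes the shape $d_{\mathrm{KV}}\theta_k = Q_k(\theta_1,\dots,\theta_{k-1})$ for a fixed quadratic expression $Q_k$ in the lower-order terms. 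In particular the infinitesimal deformation $\theta_1$ is a KV $2$-cocycle.

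Next I would set up the gauge action. A flow $\phi^t = \exp(t\xi)$ of a vector field $\xi$ acts on connections by the pushforward of Definition~\ref{defin_2_9}, and differentiating at $t = 0$ gives (up to sign) the infinitesimal action $\nabla \mapsto \mathcal{L}_\xi\nabla$. The key computation is that this infinitesimal action is a KV coboundary: setting $\eta_\xi \in C^1(A)$ to be $\eta_\xi(X) := [\xi,X]$, a direct expansion of Definition~\ref{defin_2_5}$(ii)$ at $n=1$ gives
$$(d_{\mathrm{KV}}\eta_\xi)(X,Y) = [\xi,\nabla_X Y] - \nabla_{[\xi,X]} Y - \nabla_X[\xi,Y] = (\mathcal{L}_\xi\nabla)(X,Y).$$
Thus the tangent directions to trivial deformations are exactly the restricted coboundaries $d_{\mathrm{KV}}\eta_\xi$, and replacing $\nabla^t$ by $(\phi^{-t})_*\nabla^t$ modifies its leading term by one of these without disturbing the already-normalized lower-order terms. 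Here compactness of $M$ is used to guarantee that $\xi$ is complete, so that $\phi^t$ is a genuine element of $\operatorname{Diff}(M)$ for every $t \in \mathbb{R}$.

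I would then run the inductive normalization. Suppose the deformation has been gauged so that $\theta_1 = \cdots = \theta_{k-1} = 0$. The obstruction $Q_k$ then vanishes, being quadratic in the already-killed terms, so the order-$t^k$ equation reduces to $d_{\mathrm{KV}}\theta_k = 0$; since $H^2 = 0$, the cocycle $\theta_k$ is a coboundary. Applying a gauge transformation generated by a suitable vector field whose leading effect cancels $\theta_k$ advances the normalization by one order. Iterating kills every Taylor coefficient, formally trivializing the deformation.

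The genuinely hard part is the final integration and convergence step, and I would expect two difficulties to dominate. First, the vanishing of $H^2$ only yields $\theta_k = d_{\mathrm{KV}}\eta$ for some $\eta \in C^1(A)$, whereas the gauge directions actually available are the \emph{restricted} coboundaries $d_{\mathrm{KV}}\eta_\xi$ with $\eta_\xi(X) = [\xi,X]$; one must argue that a cocycle arising as the leading term of an honest family of flat torsion-free connections (hence tensorial) indeed lies in the image of the infinitesimal diffeomorphism action, i.e. that the cohomological trivialization can be \emph{geometrically} realized. Second, the procedure above only trivializes the deformation order by order in the formal category, so one must upgrade formal triviality to an actual smooth one-parameter family $\phi^t \in \operatorname{Diff}(M)$ realizing Definition~\ref{defin_2_9} and control the remainder; this is precisely where compactness of $M$ is indispensable, both for completeness of all the generating flows and for the analytic estimates needed to assemble the successive gauge transformations into a convergent smooth family. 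The order-by-order cohomological bookkeeping is routine once the identity $d_{\mathrm{KV}}\eta_\xi = \mathcal{L}_\xi\nabla$ is established; it is the passage from the formal trivialization to a smooth $\phi^t$ that carries the real weight of the argument.
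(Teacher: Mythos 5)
The paper does not actually prove Theorem~\ref{theorem_2_10}: it is imported verbatim from Boyom's work and justified only by the citation to \cite{1}, so there is no in-paper argument to compare yours against. Judged on its own terms, your sketch is the standard Gerstenhaber/Nijenhuis--Richardson skeleton, and its one concrete computation is correct: with $\eta_\xi=\mathrm{ad}_\xi$ one indeed gets $(d_{\mathrm{KV}}\eta_\xi)(X,Y)=[\xi,\nabla_XY]-\nabla_{[\xi,X]}Y-\nabla_X[\xi,Y]=(\mathcal{L}_\xi\nabla)(X,Y)$, consistent with Example~\ref{example_3_2}. Moreover, the first of the two difficulties you flag --- that $H^2=0$ only produces $\theta_k=d_{\mathrm{KV}}\eta$ for an arbitrary $\eta\in C^1(A)$ rather than for some $\mathrm{ad}_\xi$ --- is actually closed by the paper's own Theorem~\ref{theorem_3_3}: each $\theta_k$ is symmetric (it is a Taylor coefficient of a family of torsion-free connections), so any primitive $\eta$ has symmetric $d_{\mathrm{KV}}\eta$, and Theorem~\ref{theorem_3_3} (via Takens' theorem that all derivations of $\mathfrak X$ are inner) then forces $\eta=\mathrm{ad}_\xi$ for some vector field $\xi$. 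You should cite that rather than list it as an open difficulty.

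The genuine gap is the second difficulty you name, and naming it is not the same as resolving it: as written the proposal proves nothing beyond first-order statements. The deformation is only assumed smooth in $t$, not analytic, so killing every Taylor coefficient at $t=0$ by an order-by-order gauge fixing does not trivialize the family --- a flat function of $t$ could remain. Worse, Definition~\ref{defin_2_9} demands that the trivializing family $t\mapsto\phi^t$ be a \emph{one-parameter subgroup} of $\operatorname{Diff}(M)$, i.e.\ the flow of a single vector field; an infinite composition of correction flows, even if it converged, would not have this form, so your normalization scheme does not obviously terminate in a trivialization in the required sense. The actual content of the theorem lives precisely here: one needs a slice/openness argument for the $\operatorname{Diff}(M)$-orbit of $\nabla$ inside the space of flat torsion-free connections (elliptic theory on the compact $M$, or an implicit-function-theorem argument in suitable completions), which is what \cite{1} supplies and what your proposal defers. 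So the proposal is an accurate roadmap but not a proof.
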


For more applications, we refer to \cite{2},\cite{3}, and \cite{4}.

\section{Results on the First Cochain Group}

Let $\theta \in C^1(A)=\operatorname{End}_\mathbb{R}(\Gamma(TM))$ be an arbitrary $\mathrm{KV}$ $1$-cochain of $A$. Then by the definition of $d_\mathrm{KV}$, we have
$$
(d_\mathrm{KV} \theta)(X, Y)=-\nabla_X \theta(Y)+\theta\left(\nabla_X Y\right)-\nabla_{\theta(X)} Y. 
$$

\begin{example}\label{example_3_1}
    Let $f \in C^{\infty}(M)$ be a fixed smooth function, and define a $\mathrm{KV}$ $1$-cochain $\theta \in C^1(A)$ by
    $$
    \theta\colon \Gamma(T M) \longrightarrow \Gamma(T M), \quad Z \longmapsto f Z.
    $$
    Then for any $X, Y \in \Gamma(T M)$, we have that $\left(d_{\mathrm{KV}} \theta\right)(X, Y)=-\nabla_X(f Y)+f \nabla_X Y-\nabla_{f X} Y=-\nabla_X(f Y)$.
\end{example}

We identify $\Gamma(T M)$ with the module of derivations of the $\mathbb{R}$-algebra $C^{\infty}(M)$. Recall that the Poisson bracket $[X, Y] \in \Gamma(T M)$ of two vector fields $X$ and $Y$ on $M$ is defined by
$$
[X, Y]\colon C^{\infty}(M) \longrightarrow C^{\infty}(M), \quad f \longmapsto X(Y f)-Y(X f)
$$
and $\mathfrak{X}:=(\Gamma(T M),[\cdot, \cdot])$ is a Lie algebra. Denote by $\mathrm{ad}\colon\mathfrak{X} \longrightarrow \operatorname{Der}(\mathfrak{X})$ the adjoint representation of $\mathfrak{X}$.

\begin{example}\label{example_3_2}
  Let $Z\in \Gamma(TM)$ be a fixed vector field and define $\theta:=\mathrm{ad}_Z\in C^1(A)$. Then, since $\nabla$ is flat and torsion-free, we have that 
  $$
    \begin{aligned}
    \left(d_{\mathrm{KV}} \theta\right)(X, Y) & =-\nabla_X\left(\mathrm{ad}_Z(Y)\right)+\mathrm{ad}_Z\left(\nabla_X Y\right)-\nabla_{\mathrm{ad}_Z(X)} Y \\
    & =\nabla_X[Y, Z]-\left[\nabla_X Y, Z\right]+\nabla_{[X, Z]}Y \\
    & =\nabla_X\left(\nabla_Y Z-\nabla_Z Y\right)-\nabla_{\nabla_XY} Z+\nabla_Z \nabla_X Y+\nabla_{[X, Z]} Y \\
    & =\nabla_X \nabla_Y Z-\nabla_{\nabla_X Y}Z-\left(\left[\nabla_X, \nabla_Z\right] Y-\nabla_{[X, Z]} Y\right) \\
    & =\nabla_X \nabla_Y Z-\nabla_{\nabla_X Y}Z
    \end{aligned}
    $$
    holds for all $X, Y \in \Gamma(T M)$. In particular, $d_{\mathrm{KV}} \theta$ is a $(1,2)$-tensor.
\end{example}

\begin{theorem}\label{theorem_3_3}
    Let $\theta \in C^1(A)$ be a $\mathrm{KV}$ $1$-cochain. Then the following statements are equivalent:
    \begin{enumerate}
        \item[$(i)$] $\left(d_{\mathrm{KV}} \theta\right)(X, Y)=\left(d_{\mathrm{KV}} \theta\right)(Y, X)$ for all $X, Y \in \Gamma(T M)$;
        \item[$(ii)$] there exists $X \in \Gamma(T M)$ such that $\theta=\mathrm{ad}_X$.
    \end{enumerate}
\end{theorem}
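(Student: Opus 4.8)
The plan is to split $d_{\mathrm{KV}}\theta$ into its symmetric and antisymmetric parts and to recognize the antisymmetric part as the obstruction to $\theta$ being a derivation of the Lie algebra $\mathfrak{X}$. Set
$$S(X,Y) := (d_{\mathrm{KV}}\theta)(X,Y) - (d_{\mathrm{KV}}\theta)(Y,X),$$
so that condition $(i)$ is precisely the identity $S\equiv 0$. First I would substitute the formula $(d_{\mathrm{KV}}\theta)(X,Y) = -\nabla_X\theta(Y) + \theta(\nabla_X Y) - \nabla_{\theta(X)}Y$ into $S$. The six resulting terms group into the three differences $\theta(\nabla_X Y) - \theta(\nabla_Y X)$, $\nabla_Y\theta(X) - \nabla_{\theta(X)}Y$, and $\nabla_{\theta(Y)}X - \nabla_X\theta(Y)$, each of which has the shape $\nabla_U V - \nabla_V U$ (inside or outside $\theta$). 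Since $\nabla$ is torsion-free, every such difference is a Lie bracket, and the computation collapses to
$$S(X,Y) = \theta([X,Y]) - [\theta(X),Y] - [X,\theta(Y)].$$
Hence condition $(i)$ holds if and only if $\theta$ obeys the Leibniz rule for $[\cdot,\cdot]$, that is, if and only if $\theta$ is a derivation of the Lie algebra $\mathfrak{X}$.

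With this reformulation the theorem asserts that a KV $1$-cochain is a derivation of $\mathfrak{X}$ exactly when it is an inner derivation. The implication $(ii)\Rightarrow(i)$ is then immediate: if $\theta = \mathrm{ad}_Z$ for some vector field $Z$, then the Jacobi identity for $[\cdot,\cdot]$ forces $S\equiv 0$. Alternatively, one may feed the closed form of Example~\ref{example_3_2}, namely $(d_{\mathrm{KV}}\mathrm{ad}_Z)(X,Y) = \nabla_X\nabla_Y Z - \nabla_{\nabla_X Y}Z$, into $S$ and cancel the curvature term $\nabla_X\nabla_Y Z - \nabla_Y\nabla_X Z - \nabla_{[X,Y]}Z$ using flatness and torsion-freeness.

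The content lies entirely in $(i)\Rightarrow(ii)$: from a derivation $\theta$ of $\mathfrak{X}$ I must manufacture a single vector field $Z$ with $\theta = \mathrm{ad}_Z$. This is exactly the classical theorem of Takens that every derivation of the Lie algebra of smooth vector fields on a manifold is inner, and I expect it to be the main obstacle, since it is far from formal: one must first show that an abstract endomorphism of $\mathfrak{X}$ satisfying the Leibniz rule is automatically local, and then assemble the resulting pointwise data into a globally defined vector field. In the write-up I would therefore invoke this result directly rather than reprove it. It is worth noting that, in contrast to Example~\ref{example_3_2}, the reduction to the derivation condition in the first step uses only the torsion-free hypothesis; flatness is not needed for the equivalence itself.
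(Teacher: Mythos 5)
Your proposal is correct and follows essentially the same route as the paper: both reduce condition $(i)$, via torsion-freeness alone, to the statement that $\theta$ is a derivation of the Lie algebra $\mathfrak{X}$, and then invoke Takens' theorem that every derivation of $\mathfrak{X}$ is inner. The only divergence is in the easy direction $(ii)\Rightarrow(i)$, where the paper argues via the closed form of Example~\ref{example_3_2} and flatness of $\nabla$, whereas you obtain it directly from the Jacobi identity --- a slight streamlining that also makes visible that flatness is not needed for the equivalence.
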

\begin{proof}
    We first prove that $(ii)$ implies $(i)$. Suppose that there exists $Z \in \Gamma(T M)$ such that $\theta=\mathrm{ad}_Z$, then by Example \ref{example_3_2} we have that
    $$
    \left(d_{\mathrm{KV}} \theta\right)(X, Y)=\nabla_X \nabla_Y Z-\nabla_{\nabla_X Y}Z
    $$
    holds for all $X, Y \in \Gamma(T M)$. Since $\nabla$ is flat, for any $X, Y \in \Gamma(T M)$ we have 
    $$
    \nabla_X \nabla_Y-\nabla_{\nabla_XY}-\nabla_Y \nabla_X+\nabla_{\nabla_YX}=\left[\nabla_X, \nabla_Y\right]-\nabla_{[X, Y]}=0
    $$
    and hence $\nabla_X \nabla_Y Z-\nabla_{\nabla_X Y} Z=\nabla_Y \nabla_X Z-\nabla_{\nabla_Y X} Z$, i.e. $\left(d_{\mathrm{KV}} \theta\right)(X, Y)=\left(d_{\mathrm{KV}} \theta\right)(Y, X)$.

    Now we shall prove that $(i)$ implies $(ii)$. Since $\nabla$ is torsion free, we have that
    $$
    \begin{aligned}
    \left(d_{\mathrm{KV}} \theta\right)(X, Y) & =-\nabla_X(\theta(Y))+\theta\left(\nabla_X Y\right)-\nabla_{\theta(X)} Y \\
    & =\theta\left(\nabla_X Y\right)-[\theta(X), Y]-\left(\nabla_X(\theta(Y))+\nabla_Y(\theta(X))\right)
    \end{aligned}
    $$
    holds for all $X, Y \in \Gamma(T M)$. Suppose that $(i)$ is satisfied, then for any $X, Y \in \Gamma(T M)$, we have
    $$
    \begin{aligned}
    \theta([X, Y])-([\theta(X), Y]+[X, \theta(Y)]) & =\theta\left(\nabla_X Y\right)-[\theta(X), Y]-\left(\theta\left(\nabla_Y X\right)-[\theta(Y), X]\right) \\
    & =\left(d_{\mathrm{KV}} \theta\right)(X, Y)-\left(d_{\mathrm{KV}} \theta\right)(Y, X) \\
    & =0
    \end{aligned}
    $$
    i.e. $\theta([X, Y])=[\theta(X), Y]+[X, \theta(Y)]$. Therefore we conclude that $\theta \in \operatorname{Der}(\mathfrak{X})$. Since by \cite{5} all derivations of $\mathfrak{X}$ are inner, there exists $Z \in \Gamma(T M)$ such that $\theta=\mathrm{ad}_Z$.
\end{proof}
\begin{remark}\label{remark_3_4}
    For an arbitrary $\theta \in C^1(A)$, the equality $\left(d_{\mathrm{KV}} \theta\right)(X, Y)=\left(d_{\mathrm{KV}} \theta\right)(Y, X)$ does not hold for all $X, Y \in \Gamma(T M)$ in general. For example, if $\theta \in C^1(A)$ is the identity map, then by Example \ref{example_3_1} $\left(d_{\mathrm{KV}} \theta\right)(X, Y)=-\nabla_X Y$ and hence $\left(d_{\mathrm{KV}} \theta\right)(X, Y)-\left(d_{\mathrm{KV}} \theta\right)(Y, X)=[Y, X]$.
\end{remark}

\section{Results on the Second Cochain Group}

Let $\theta \in C^2(A)$ be an arbitrary $\mathrm{KV}$ $2$-cochain of $A$. Then by the definition of $d_\mathrm{KV}$, we have
$$
\begin{aligned}
    \left(\mathrm{d}_{K V} \theta\right)(X, Y, Z)=&-\nabla_X \theta(Y, Z)+\theta\left(\nabla_X Y, Z\right)+\theta\left(Y, \nabla_X Z\right)-\nabla_{\theta(Y, X)} Z\\ 
    &+\nabla_Y \theta(X, Z)-\theta\left(\nabla_Y X, Z\right)-\theta\left(X, \nabla_Y Z\right)+\nabla_{\theta(X, Y)} Z\\ 
    =&(\nabla_Y \theta)(X, Z)-(\nabla_X \theta)(Y, Z)+\nabla_{\theta(X, Y)-\theta(Y, X)} Z.
\end{aligned}
$$

\begin{proposition}\label{proposition_4_1}
    Let $\operatorname{Id}:\Gamma(TM)\to\Gamma(TM)$ be the identity map. Then it holds that $d_{\mathrm{KV}}(-\operatorname{Id})=\nabla$. In particular $d_{\mathrm{KV}} \nabla=0$.
\end{proposition}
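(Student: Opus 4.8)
The plan is to reduce the first assertion to a one-line computation of the KV differential of the identity $1$-cochain, and then deduce $d_{\mathrm{KV}}\nabla = 0$ formally from the nilpotency recorded in Lemma~\ref{lemma_2_6}.

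First I would compute $d_{\mathrm{KV}}(\operatorname{Id})$ directly. Since $\operatorname{Id} \in C^1(A)$, I would apply the formula for the differential of a $1$-cochain stated at the opening of Section~3, namely $(d_{\mathrm{KV}}\theta)(X,Y) = -\nabla_X\theta(Y) + \theta(\nabla_X Y) - \nabla_{\theta(X)}Y$, specialised to $\theta = \operatorname{Id}$. The three terms then collapse to $-\nabla_X Y + \nabla_X Y - \nabla_X Y = -\nabla_X Y$; this is exactly the case $f \equiv 1$ of Example~\ref{example_3_1}. Invoking the $\mathbb{R}$-linearity of $d_{\mathrm{KV}}$ (it is an endomorphism of a graded $\mathbb{R}$-vector space), I would then read off $(d_{\mathrm{KV}}(-\operatorname{Id}))(X,Y) = \nabla_X Y$ for all $X, Y \in \Gamma(TM)$. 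Since $\nabla$, regarded as the $2$-cochain $(X,Y)\mapsto \nabla_X Y$, is precisely this map, I conclude $d_{\mathrm{KV}}(-\operatorname{Id}) = \nabla$ as an identity of elements of $C^2(A)$.

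For the \emph{in particular} clause, I would simply apply $d_{\mathrm{KV}}$ a second time to the expression just obtained, giving $d_{\mathrm{KV}}\nabla = d_{\mathrm{KV}}\bigl(d_{\mathrm{KV}}(-\operatorname{Id})\bigr) = (d_{\mathrm{KV}}\circ d_{\mathrm{KV}})(-\operatorname{Id}) = 0$ by Lemma~\ref{lemma_2_6}. Put differently, $\nabla$ is a KV coboundary and hence automatically a KV cocycle, so no separate verification is needed once the first identity is in hand.

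I do not expect any genuine obstacle here. The only points deserving a moment's care are the sign bookkeeping in the $1$-cochain formula (making sure the three terms cancel to $-\nabla_X Y$ and not to some other multiple), and the harmless but worth-stating identification of the connection $\nabla$ with the $2$-cochain $(X,Y)\mapsto\nabla_X Y$, so that the displayed equality $d_{\mathrm{KV}}(-\operatorname{Id})=\nabla$ is literally an equality in $C^2(A)$ rather than merely a pointwise coincidence.
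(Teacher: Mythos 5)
Your proposal is correct and follows exactly the paper's own argument: the identity $d_{\mathrm{KV}}(-\operatorname{Id})=\nabla$ is the $f\equiv 1$ case of Example~\ref{example_3_1} (which you verify explicitly), and $d_{\mathrm{KV}}\nabla=0$ then follows from Lemma~\ref{lemma_2_6}. No issues.
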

\begin{proof}
    The first assertion follows from Example \ref{example_3_1}. By Lemma \ref{lemma_2_6}, $d_{\mathrm{KV}} \nabla=d_{\mathrm{KV}} d_{\mathrm{KV}} (-\operatorname{Id})=0$.
\end{proof}
Note that the sign convention in $d_{\mathrm{KV}} (\operatorname{Id})=-\nabla$ follows Boyom.

\begin{theorem}\label{theorem_4_2}
    Let $D$ be a torsion-free affine connection on the tangent bundle $T M \rightarrow M$, and let $\theta:=\nabla-D \in C^2(A)$. Then the following properties hold:
    \begin{enumerate}
        \item[$(i)$] $\theta$ is a $(1,2)$-tensor;
        \item[$(ii)$] $\theta(X, Y)=\theta(Y, X)$ for all $X, Y \in \Gamma(T M)$;
        \item[$(iii)$] $\left(d_{\mathrm{KV}} \theta\right)(X, Y, Z)=\left(\nabla_Y \theta\right)(X, Z)-\left(\nabla_X \theta\right)(Y, Z)$ for all $X, Y, Z \in \Gamma(T M)$, in particular, $d_{\mathrm{KV}} \theta$ is a $(1,3)$-tensor;
        \item[$(iv)$] $\theta \in \mathrm{Im}(d_{\mathrm{KV}})$ if and only if there exists $Z\in\Gamma(TM)$ such that 
        $$
        \theta(X,Y)=\nabla_X \nabla_Y Z-\nabla_{\nabla_X Y}Z
        $$
        for all $ X,Y\in\Gamma(TM)$.
    \end{enumerate}
\end{theorem}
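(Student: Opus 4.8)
The plan is to treat the four assertions in the order stated, since each later part leans on the earlier ones. For $(i)$, I would simply verify that $\theta=\nabla-D$ is $C^\infty(M)$-bilinear: linearity in the first slot is immediate because both connections are $C^\infty(M)$-linear there, while in the second slot the two derivation terms $(Xf)Y$ arising from $\nabla_X(fY)$ and $D_X(fY)$ cancel, leaving $\theta(X,fY)=f\theta(X,Y)$. Bilinearity over $C^\infty(M)$ is exactly the assertion that $\theta$ is a $(1,2)$-tensor. For $(ii)$, I would use that $\nabla$ and $D$ are both torsion-free, so that $\nabla_X Y-\nabla_Y X=[X,Y]=D_X Y-D_Y X$; subtracting the two identities yields $\theta(X,Y)-\theta(Y,X)=0$.

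For $(iii)$, I would substitute the symmetry from $(ii)$ into the general formula for the differential of a $2$-cochain recorded just before the statement, namely $(d_{\mathrm{KV}}\theta)(X,Y,Z)=(\nabla_Y\theta)(X,Z)-(\nabla_X\theta)(Y,Z)+\nabla_{\theta(X,Y)-\theta(Y,X)}Z$; the final term drops out precisely because $\theta$ is symmetric, giving the stated identity. For the tensoriality claim, I would invoke the standard fact that the total covariant derivative $\nabla\theta$ of the $(1,2)$-tensor $\theta$ is itself a $(1,3)$-tensor, so that each of $(\nabla_Y\theta)(X,Z)$ and $(\nabla_X\theta)(Y,Z)$ is $C^\infty(M)$-linear in all three arguments, and hence so is their difference.

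Part $(iv)$ carries the real content, and I would prove the two implications separately. The ``if'' direction is immediate from Example \ref{example_3_2}: if $\theta(X,Y)=\nabla_X\nabla_Y Z-\nabla_{\nabla_X Y}Z$, then $\theta=d_{\mathrm{KV}}(\mathrm{ad}_Z)$ lies in $\mathrm{Im}(d_{\mathrm{KV}})$. For the ``only if'' direction, I would suppose $\theta=d_{\mathrm{KV}}\phi$ for some $\phi\in C^1(A)$. The crucial observation is that $\theta$ is symmetric by $(ii)$, so $d_{\mathrm{KV}}\phi$ satisfies condition $(i)$ of Theorem \ref{theorem_3_3}, which then forces $\phi=\mathrm{ad}_Z$ for some $Z\in\Gamma(TM)$. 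Feeding this $\phi$ back through Example \ref{example_3_2} produces $\theta(X,Y)=\nabla_X\nabla_Y Z-\nabla_{\nabla_X Y}Z$, as required.

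The main obstacle—really the one genuine idea in the whole argument—is recognizing in part $(iv)$ that the symmetry established in $(ii)$ is exactly the hypothesis of Theorem \ref{theorem_3_3}, so that the abstract membership $\theta\in\mathrm{Im}(d_{\mathrm{KV}})$ can be upgraded to the explicit closed form by passing through the inner-derivation structure of $\mathfrak{X}$. Once this link is identified, every remaining step is a routine substitution, and no further estimates or constructions are needed.
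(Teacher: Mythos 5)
Your proposal is correct and follows essentially the same route as the paper: parts $(i)$ and $(ii)$ are the standard facts about the difference tensor of two torsion-free connections (which the paper outsources to a reference and you verify directly), part $(iii)$ is the symmetry of $\theta$ fed into the displayed formula for $d_{\mathrm{KV}}$ on $2$-cochains, and part $(iv)$ is exactly the paper's intended combination of Example \ref{example_3_2} with Theorem \ref{theorem_3_3}, using the symmetry from $(ii)$ to force any $d_{\mathrm{KV}}$-preimage in $C^1(A)$ to be of the form $\mathrm{ad}_Z$. No gaps.
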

\begin{proof}
    Properties $(i)$ and $(ii)$ are proved in \cite{6}. Property $(iii)$ follows from $(ii)$ and the very definition of $d_{\mathrm{KV}}$. Property $(iv)$ is a direct consequence of Example \ref{example_3_2} and Theorem \ref{theorem_3_3}.
\end{proof}

As special cases of Theorem \ref{theorem_4_2}, we now consider several concrete deformation of flat torsion-free connection. 

\begin{defin}\label{defin_4_3}
    The connection $D$ on tangent bundle $TM$ defined by
    $$
    D_X Y := \nabla_X Y + \omega(X) Y+\omega(Y) X
    $$
    where $\omega$ is a given 1-form, is said to be a projective transformation of $\nabla$.
\end{defin}

It is routine to check that the formula above indeed defines a connection.

\begin{proposition}\label{proposition_4_3}
    Suppose that $\operatorname{dim} M \geqslant 3$ and let $\omega \in \Omega^1(M)$ be a 1 -form. Let $\theta \in C^2(A)$ be the $\mathrm{KV}$ $2$-cochain satisfying $\theta(X, Y)=\omega(X) Y+\omega(Y) X$ for all $X, Y \in \Gamma(T M)$. Then $d_{\mathrm{KV}} \theta=0$ if and only if $\nabla \omega=0$.
\end{proposition}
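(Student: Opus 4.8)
The plan is to compute $d_{\mathrm{KV}}\theta$ explicitly in terms of the covariant derivative $\nabla\omega$ and then extract the conclusion by a pointwise linear-algebra argument. First I would note that $\theta(X,Y)=\omega(X)Y+\omega(Y)X$ is manifestly $C^\infty$-bilinear and symmetric, so it is a symmetric $(1,2)$-tensor; consequently the term $\nabla_{\theta(X,Y)-\theta(Y,X)}Z$ in the general formula for $(d_{\mathrm{KV}}\theta)(X,Y,Z)$ recorded at the start of this section vanishes, leaving $(d_{\mathrm{KV}}\theta)(X,Y,Z)=(\nabla_Y\theta)(X,Z)-(\nabla_X\theta)(Y,Z)$. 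Next I would expand $(\nabla_X\theta)(Y,Z)=\nabla_X(\theta(Y,Z))-\theta(\nabla_XY,Z)-\theta(Y,\nabla_XZ)$ using the Leibniz rule for $\nabla_X(\omega(Y)Z)$. Writing $\alpha:=\nabla\omega$ for the $(0,2)$-tensor defined by $\alpha(X,Y):=(\nabla_X\omega)(Y)=X(\omega(Y))-\omega(\nabla_XY)$, the cross terms cancel and one finds $(\nabla_X\theta)(Y,Z)=\alpha(X,Y)Z+\alpha(X,Z)Y$.

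Substituting this back yields the master identity $(d_{\mathrm{KV}}\theta)(X,Y,Z)=\bigl(\alpha(Y,X)-\alpha(X,Y)\bigr)Z+\alpha(Y,Z)\,X-\alpha(X,Z)\,Y$. The ``if'' direction is then immediate: if $\nabla\omega=0$ then $\alpha\equiv 0$ and every term vanishes, so $d_{\mathrm{KV}}\theta=0$. For the converse I would use that $d_{\mathrm{KV}}\theta$ is a $(1,3)$-tensor (Theorem \ref{theorem_4_2}$(iii)$), so the hypothesis $d_{\mathrm{KV}}\theta=0$ may be tested pointwise on arbitrary tangent vectors. Fixing $p\in M$ and invoking $\dim M\geqslant 3$ to choose three linearly independent vectors $X,Y,Z\in T_pM$, the master identity exhibits the zero vector as a combination $c_Z\,Z+c_X\,X+c_Y\,Y$ of the independent triple; linear independence forces all three coefficients to vanish, and in particular the coefficient of $X$ gives $\alpha(Y,Z)=0$. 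Since any two independent vectors at $p$ can be completed to an independent triple (again using $\dim M\geqslant 3$), this shows $\alpha$ annihilates every linearly independent pair.

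The hard part — really the only delicate point — will be upgrading ``$\alpha$ vanishes on independent pairs'' to ``$\alpha\equiv 0$'', since the coefficient extraction says nothing directly about the diagonal values $\alpha(v,v)$. Here I would exploit bilinearity: given $v\neq 0$, pick $w$ independent of $v$; then $v$ and $v+w$ are independent, so $0=\alpha(v,v+w)=\alpha(v,v)+\alpha(v,w)=\alpha(v,v)$, and any dependent pair vanishes by homogeneity. Hence $\alpha=0$ at every point, i.e.\ $\nabla\omega=0$. It is precisely the need for a third independent direction when reading off the coefficient of $X$ that makes the hypothesis $\dim M\geqslant 3$ enter the argument.
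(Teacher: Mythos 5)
Your proposal is correct and follows essentially the same route as the paper: both derive the identity $(d_{\mathrm{KV}}\theta)(X,Y,Z)=(\nabla_Y\omega)(X)Z+(\nabla_Y\omega)(Z)X-(\nabla_X\omega)(Y)Z-(\nabla_X\omega)(Z)Y$ and then use linear independence of three directions at a point to force $\nabla\omega=0$. The only cosmetic difference is that the paper works with a local frame $e_i,e_j,e_k$ (which handles the diagonal values $(\nabla_{e_i}\omega)(e_i)$ uniformly by reading off the coefficient of $e_k$), whereas you argue with arbitrary independent triples and then need your extra polarization step $\alpha(v,v+w)=0$ to recover the diagonal.
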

\begin{proof}
    Take any arbitrary $X, Y, Z \in \Gamma(T M)$. Since $\theta(X, Y)=\theta(Y, X)$ and
    $$
    \begin{aligned}
    \left(\nabla_X \theta\right)(Y, Z)= & \nabla_X(\omega(Y) Z+\omega(Z) Y)-\omega\left(\nabla_X Y\right) Z \\
    & -\omega(Z) \nabla_X Y-\omega(Y) \nabla_X Z-\omega\left(\nabla_X Z\right) Y \\
    = & \nabla_X(\omega(Y) Z)-\omega\left(\nabla_X Y\right) Z-\omega(Y) \nabla_X Z \\
    & +\nabla_X(\omega(Z) Y)-\omega\left(\nabla_X Z\right) Y-\omega(Z) \nabla_X Y \\
    = & \left(\nabla_X \omega\right)(Y) Z+\left(\nabla_X \omega\right)(Z) Y,
    \end{aligned}
    $$
    we obtain that
    $$
    \begin{aligned}
    \left(d_{\mathrm{KV}} \theta\right)(X, Y, Z) & =\left(\nabla_Y \theta\right)(X, Z)-\left(\nabla_X \theta\right)(Y, Z)+\nabla_{\theta(X, Y)-\theta(Y, X)}Z \\
    & =\left(\nabla_Y \omega\right)(X) Z+\left(\nabla_Y \omega\right)(Z) X-\left(\nabla_X \omega\right)(Y) Z-\left(\nabla_X \omega\right)(Z) Y .
    \end{aligned}
    $$
    Therefore $\nabla \omega=0$ implies that $d_{\mathrm{KV}} \theta=0$.

    Now assume that $d_{\mathrm{KV}} \theta=0$. Let $e_1, \ldots, e_n$ be a local frame in the tangent bundle $T M \rightarrow M$. Take any arbitrary $i, j \in\{1, \ldots, n\}$. Since $n=\operatorname{dim} M \geqslant 3$, there exists $k \in\{1, \ldots, n\}$ such that $k \neq i$ and $k \neq j$. Since
    $$
    \left(\nabla_{e_k} \omega\right)\left(e_i\right) e_j+\left(\nabla_{e_k} \omega\right)\left(e_j\right) e_i-\left(\nabla_{e_i} \omega\right)\left(e_k\right) e_j-\left(\nabla_{e_i} \omega\right)\left(e_j\right) e_k
    =0,
    $$
    in particular we have $\left(\nabla_{e_i} \omega\right)\left(e_j\right)=0$. Therefore we conclude that $\nabla \omega=0$.
\end{proof}

\begin{defin}
    The connection $D$ on tangent bundle $TM$ defined by
    $$
    D_X Y := \nabla_X Y -h(X, Y) V 
    $$
    where $h$ is a given pseudo-Riemannian metric and $V$ is a given vector field, is said to be a dual-projective transformation of $\nabla$.  
\end{defin}

As in \ref{defin_4_3}, one can check that the formula above indeed defines a connection.

\begin{defin}\label{defin_4_4}
    A pseudo-Riemannian metric $h$ on $M$ is said to be Codazzi-coupled with $\nabla$, if the Codazzi equation
    $$
    \left(\nabla_X h\right)(Y, Z)=\left(\nabla_Y h\right)(X, Z)
    $$
    holds for all $X, Y, Z \in \Gamma(T M)$.
\end{defin}

\begin{proposition}\label{proposition_4_5}
    Let $h$ be a pseudo-Riemannian metric on $M$ and let $V \in \Gamma(T M)$ be a non-vanishing vector field parallel to $\nabla$. Let $\theta \in C^2(A)$ be the $\mathrm{KV}$ $2$-cochain satisfying $\theta(X, Y)=-h(X, Y) V$ for all $X, Y \in \Gamma(T M)$. Then $d_{\mathrm{KV}} \theta=0$ if and only if $h$ is Codazzi-coupled with $\nabla$.
\end{proposition}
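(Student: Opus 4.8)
The plan is to mimic the proof of Proposition~\ref{proposition_4_3}: exploit the symmetry of $\theta$ to collapse the KV differential into a single covariant-derivative expression, compute that expression using the parallelism of $V$, and then read off the Codazzi condition directly.

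First I would record that $\theta$ is symmetric. Since $h$ is a metric it is symmetric, so $\theta(X,Y)=-h(X,Y)V=-h(Y,X)V=\theta(Y,X)$ for all $X,Y\in\Gamma(TM)$ (indeed $\theta$ is, up to an overall sign, the tensor $\nabla-D$ for the dual-projective transformation $D$). Consequently the term $\nabla_{\theta(X,Y)-\theta(Y,X)}Z$ in the general formula for the KV differential of a $2$-cochain vanishes identically, leaving
\[
(d_{\mathrm{KV}}\theta)(X,Y,Z)=(\nabla_Y\theta)(X,Z)-(\nabla_X\theta)(Y,Z),
\]
exactly the reduction appearing in Theorem~\ref{theorem_4_2}$(iii)$.

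The key step is to evaluate $(\nabla_X\theta)(Y,Z)$ explicitly, and here the hypothesis that $V$ is parallel is essential. Expanding the definition of $\nabla_X\theta$ from Definition~\ref{defin_2_5} gives
\[
(\nabla_X\theta)(Y,Z)=\nabla_X\bigl(\theta(Y,Z)\bigr)-\theta(\nabla_X Y,Z)-\theta(Y,\nabla_X Z).
\]
Substituting $\theta(\cdot,\cdot)=-h(\cdot,\cdot)V$ and using $\nabla_X V=0$, the derivative $\nabla_X\bigl(-h(Y,Z)V\bigr)$ loses its $\nabla_X V$ contribution, and after collecting the remaining terms I expect to obtain $(\nabla_X\theta)(Y,Z)=-(\nabla_X h)(Y,Z)\,V$, where $(\nabla_X h)(Y,Z)=X\bigl(h(Y,Z)\bigr)-h(\nabla_X Y,Z)-h(Y,\nabla_X Z)$ is the usual covariant derivative of the $(0,2)$-tensor $h$. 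Feeding this back into the reduced formula yields
\[
(d_{\mathrm{KV}}\theta)(X,Y,Z)=\bigl[(\nabla_X h)(Y,Z)-(\nabla_Y h)(X,Z)\bigr]V.
\]

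Finally I would conclude the equivalence. The direction $(\Leftarrow)$ is immediate: if $h$ is Codazzi-coupled with $\nabla$ then the bracket vanishes for all $X,Y,Z$, hence $d_{\mathrm{KV}}\theta=0$. For $(\Rightarrow)$ the only real subtlety---and the single place where the hypotheses must be used in full---is the non-vanishing of $V$: since $V$ is nowhere zero, $d_{\mathrm{KV}}\theta=0$ forces the scalar coefficient $(\nabla_X h)(Y,Z)-(\nabla_Y h)(X,Z)$ to vanish pointwise for all $X,Y,Z$, which is precisely the Codazzi equation of Definition~\ref{defin_4_4}. Thus the main obstacle is not any hard computation but rather carrying out the identity $(\nabla_X\theta)(Y,Z)=-(\nabla_X h)(Y,Z)V$ correctly, ensuring the cancellation $\nabla_X V=0$ is invoked, with $V\neq 0$ needed only to strip off the coefficient at the very last step.
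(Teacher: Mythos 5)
Your proposal is correct and follows essentially the same route as the paper's proof: symmetry of $\theta$ kills the $\nabla_{\theta(X,Y)-\theta(Y,X)}Z$ term, parallelism of $V$ gives $(\nabla_X\theta)(Y,Z)=-(\nabla_X h)(Y,Z)V$, and the non-vanishing of $V$ strips the scalar coefficient to yield the Codazzi equation. No gaps.
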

\begin{proof}
    Take any arbitrary $X, Y, Z \in \Gamma(T M)$. Since $\theta(X, Y)=\theta(Y, X)$ and
    $$
    \begin{aligned}
    \left(\nabla_X \theta\right)(Y, Z) & =-\nabla_X(h(Y, Z) V)+h\left(\nabla_X Y, Z\right) V+h\left(Y, \nabla_X Z\right) V \\
    & =-\left(\nabla_X h\right)(Y, Z) V-h(Y, Z) \nabla_X V \\
    & =-\left(\nabla_X h\right)(Y, Z) V,
    \end{aligned}
    $$
    we obtain that
    $$
    \begin{aligned}
    \left(d_{\mathrm{KV}} \theta\right)(X, Y, Z) & =\left(\nabla_Y \theta\right)(X, Z)-\left(\nabla_X \theta\right)(Y, Z)+\nabla_{\theta(X, Y)-\theta(Y, X)}Z \\
    & =\left(\left(\nabla_X h\right)(Y, Z)-\left(\nabla_Y h\right)(X, Z)\right) V.
    \end{aligned}
    $$
    Since $V$ is non-vanishing, we have that $d_{\mathrm{KV}} \theta=0$ if and only if $\left(\nabla_X h\right)(Y, Z)=\left(\nabla_Y h\right)(X, Z)$ for all $X, Y, Z \in \Gamma(T M)$.
\end{proof}

Proposition \ref{proposition_4_3} and \ref{proposition_4_5} give 
characterizations of deformation from a flat connection arising from projective and dual-projective transformation in  
terms of KV cochains. 

\section{Calculations of Quantities Related to Hessian Geometry}

Throughout this section, we fix a Riemannian metric $g$ on $M$.

\begin{defin}\label{defin_5_1}
    The conjugate connection $\nabla^*$ of $\nabla$ is defined to be the unique affine connection on the tangent bundle $T M \rightarrow M$ such that the equation
    $$
    Z g(X, Y)=g\left(\nabla_Z X, Y\right)+g\left(X, \nabla_Z^* Y\right)
    $$
    holds for all $X, Y, Z \in \Gamma(T M)$.
\end{defin}

See for example \cite{7} for a proof of the fact that $\nabla^*$ is well-defined.
\begin{lemma}\label{lemma_5_2}
    If the Riemannian metric $g$ is Codazzi-coupled with $\nabla$, then the conjugate connection $\nabla^*$ is flat and torsion-free, and $\frac{1}{2}\left(\nabla^*+\nabla\right)$ is the Levi-Civita connection of $(M, g)$. 
\end{lemma}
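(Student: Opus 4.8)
The plan is to reduce the entire statement to a single \emph{difference tensor} and to the fact that conjugacy is preserved under the natural operations. First I would introduce $C(X,Y) := \nabla^*_X Y - \nabla_X Y$; one checks immediately that $C$ is $C^\infty(M)$-bilinear, hence a $(1,2)$-tensor, and that $\nabla^*_X Y = \nabla_X Y + C(X,Y)$. Subtracting $g(X,\nabla_Z Y)$ from both sides of the defining equation $Z g(X,Y) = g(\nabla_Z X,Y) + g(X,\nabla^*_Z Y)$ and recognizing the remainder as $(\nabla_Z g)(X,Y)$ yields the key identity
\[
(\nabla_Z g)(X,Y) = g\bigl(X, C(Z,Y)\bigr) \qquad\text{for all } X,Y,Z \in \Gamma(TM).
\]
Because $g$ is symmetric, the left-hand side is symmetric in its two metric arguments, so this one identity encodes all the interaction between $C$ and $g$ that I will need.

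Next I would dispatch torsion-freeness and the Levi-Civita claim. Since $\nabla$ is torsion-free, $T^{\nabla^*}(X,Y) = C(X,Y) - C(Y,X)$; pairing with an arbitrary $W$ and using the key identity (together with symmetry of $g$ to place $W$ in the appropriate slot) rewrites $g(W,C(X,Y)) = (\nabla_X g)(Y,W)$ and $g(W,C(Y,X)) = (\nabla_Y g)(X,W)$. The Codazzi equation of Definition \ref{defin_4_4} is exactly $(\nabla_X g)(Y,W) = (\nabla_Y g)(X,W)$, so $g\bigl(W, C(X,Y)-C(Y,X)\bigr)=0$ for all $W$, and non-degeneracy of $g$ forces $C$ to be symmetric, i.e. $T^{\nabla^*}=0$. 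For the Levi-Civita assertion I would note that swapping $X\leftrightarrow Y$ in the defining equation and invoking symmetry of $g$ produces the companion relation $Z g(X,Y) = g(\nabla^*_Z X,Y) + g(X,\nabla_Z Y)$; averaging this with the original shows $\bar\nabla := \tfrac12(\nabla+\nabla^*)$ satisfies $Z g(X,Y) = g(\bar\nabla_Z X,Y)+g(X,\bar\nabla_Z Y)$, hence is metric-compatible. As the average of two torsion-free connections, $\bar\nabla$ is torsion-free, so by the uniqueness in the fundamental theorem of Riemannian geometry it is the Levi-Civita connection of $(M,g)$.

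For flatness of $\nabla^*$ I would prove the \emph{curvature-duality} relation, which I expect to be the main obstacle since it is the longest calculation and demands careful bookkeeping. Writing the defining identity as $X g(U,V) = g(\nabla_X U,V)+g(U,\nabla^*_X V)$, I would apply $Y$ to it, apply $X$ to its $X\leftrightarrow Y$ swap, subtract, and substitute $[X,Y]g(U,V) = g(\nabla_{[X,Y]}U,V)+g(U,\nabla^*_{[X,Y]}V)$. The mixed terms of the form $g(\nabla_X U,\nabla^*_Y V)$ cancel pairwise, leaving
\[
g\bigl(R^{\nabla}(X,Y)U, V\bigr) + g\bigl(U, R^{\nabla^*}(X,Y)V\bigr) = 0,
\]
where $R^{\nabla}$ and $R^{\nabla^*}$ denote the respective curvature tensors. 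Since $\nabla$ is flat, $R^{\nabla}=0$, whence $g\bigl(U, R^{\nabla^*}(X,Y)V\bigr)=0$ for all $U$, and non-degeneracy of $g$ gives $R^{\nabla^*}=0$. I would remark that this final step relies only on flatness of $\nabla$, so the Codazzi hypothesis is genuinely needed only for the torsion-freeness of $\nabla^*$ (and thereby for identifying $\bar\nabla$ as Levi-Civita), whereas flatness of $\nabla^*$ is automatic.
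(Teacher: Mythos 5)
Your proof is correct. Note that the paper does not actually supply an argument for this lemma --- it simply defers to references [8] and [9] --- so there is no internal proof to compare against; what you have written is the standard self-contained derivation, and every step checks out. The key identity $(\nabla_Z g)(X,Y)=g\bigl(X,C(Z,Y)\bigr)$ correctly packages the conjugacy relation, the Codazzi hypothesis is used exactly where it is needed (symmetry of $C$, hence torsion-freeness of $\nabla^*$ and the identification of $\tfrac12(\nabla+\nabla^*)$ as Levi-Civita via uniqueness), and the curvature-duality identity
\[
g\bigl(R^{\nabla}(X,Y)U,V\bigr)+g\bigl(U,R^{\nabla^*}(X,Y)V\bigr)=0
\]
is verified by the correct cancellation of the mixed terms, so flatness of $\nabla^*$ follows from flatness of $\nabla$ and non-degeneracy of $g$ alone. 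Your closing observation --- that the Codazzi coupling is genuinely needed only for torsion-freeness, while $R^{\nabla^*}=0$ is automatic from $R^{\nabla}=0$ --- is accurate under this paper's convention that ``flat'' means vanishing curvature, and is a sharper statement than the lemma itself. The only cosmetic caveat is the slot-swapping in $g(W,C(X,Y))=(\nabla_X g)(Y,W)$, which you correctly justify by symmetry of $\nabla_X g$; no gap there.
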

\begin{proof}
    This statement is proved in \cite{8} and \cite{9}.
\end{proof}

Lemma\ref{lemma_5_2} describes what is known as Hessian geometry. (Recall that a Riemannian manifold $(M, g)$ together with a flat and torsion-free connection $\nabla$ is said to be of Hessian type, if the conjugate connection $\nabla^*$ of $\nabla$ is also flat and torsion-free, or equivalently, the Riemannian metric $g$ can be expressed locally as the second derivative of a smooth function on $M$.) 

From now on, we consider only Riemannian manifolds of Hessian type.

As usual, we denote by 
$$R\colon \Gamma(T M) \times \Gamma(T M) \times \Gamma(T M) \longrightarrow \Gamma(T M), \quad (X,Y,Z)\longmapsto R(X,Y)Z
$$
the Riemann curvature tensor of $(M, g)$.

\begin{theorem}\label{theorem_5_3}
    Suppose that $g$ is Codazzi-coupled with $\nabla$. Then, for any $X, Y, Z \in \Gamma(T M)$, it holds that $\left(d_{\mathrm{KV}} \nabla^*\right)(X, Y, Z)=4 R(X, Y) Z$.
\end{theorem}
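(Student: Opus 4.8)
The plan is to feed $\theta=\nabla^{*}$ into the structural formula for $d_{\mathrm{KV}}$ on $2$-cochains obtained at the beginning of Section 4, and then to recast the resulting expression as a comparison between the curvatures of $\nabla$, $\nabla^{*}$ and the Levi-Civita connection. The two facts that make this possible are already available: since $g$ is Codazzi-coupled with $\nabla$, Lemma \ref{lemma_5_2} tells us that $\nabla^{*}$ is \emph{flat and torsion-free} and that $\bar\nabla:=\tfrac12(\nabla+\nabla^{*})$ is the Levi-Civita connection of $(M,g)$. The torsion-freeness of $\nabla^{*}$ is what lets the formula close up nicely, and the flatness of $\nabla^{*}$ (not just of $\nabla$) is the hidden ingredient that will produce the numerical factor.

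First I would apply $(d_{\mathrm{KV}}\theta)(X,Y,Z)=(\nabla_Y\theta)(X,Z)-(\nabla_X\theta)(Y,Z)+\nabla_{\theta(X,Y)-\theta(Y,X)}Z$ with $\theta=\nabla^{*}$. Because $\nabla^{*}$ is torsion-free, $\nabla^{*}_XY-\nabla^{*}_YX=[X,Y]$, so the final term becomes $\nabla_{[X,Y]}Z$, giving $(d_{\mathrm{KV}}\nabla^{*})(X,Y,Z)=(\nabla_Y\nabla^{*})(X,Z)-(\nabla_X\nabla^{*})(Y,Z)+\nabla_{[X,Y]}Z$. I would then introduce the difference tensor $L:=\nabla^{*}-\nabla$, a genuine $(1,2)$-tensor, and substitute $\nabla^{*}_AB=\nabla_AB+L(A,B)$ throughout. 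Expanding via the Leibniz rule that defines $\nabla_X\theta$, the terms containing only $\nabla$ assemble into $-2R^{\nabla}(X,Y)Z$ and vanish by flatness of $\nabla$, while the first-order terms in $L$ collapse — using that $\nabla$ is torsion-free — to the clean expression $(d_{\mathrm{KV}}\nabla^{*})(X,Y,Z)=(\nabla_Y L)(X,Z)-(\nabla_X L)(Y,Z)$. This is routine bookkeeping that I would not spell out in full.

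The decisive step is to identify this with a multiple of the Riemann curvature $R$ of $\bar\nabla$. For a $(1,2)$-tensor $S$ one has the standard comparison $R^{\nabla+S}(X,Y)Z=R^{\nabla}(X,Y)Z+(\nabla_XS)(Y,Z)-(\nabla_YS)(X,Z)+S(X,S(Y,Z))-S(Y,S(X,Z))$. Writing $P:=(\nabla_XL)(Y,Z)-(\nabla_YL)(X,Z)$ and $Q:=L(X,L(Y,Z))-L(Y,L(X,Z))$, I would apply this identity twice. Taking $S=L$, so that $\nabla+S=\nabla^{*}$, and using $R^{\nabla}=R^{\nabla^{*}}=0$ yields $P+Q=0$; taking $S=\tfrac12L$, so that $\nabla+S=\bar\nabla$, yields $R=\tfrac12P+\tfrac14Q$. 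Solving these two linear relations gives $P=4R$, whence $(d_{\mathrm{KV}}\nabla^{*})(X,Y,Z)=-P=4R(X,Y)Z$.

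The main obstacle is precisely this last paragraph. One must derive (or cite) the curvature-comparison identity and, more importantly, recognize that \emph{both} flatness hypotheses are needed: $R^{\nabla}=0$ alone only removes the pure-$\nabla$ part, and it is the additional input $R^{\nabla^{*}}=0$ coming from Codazzi coupling, combined with the Levi-Civita relation $\bar\nabla=\nabla+\tfrac12L$, that pins down the coefficient and produces the factor $4$. I would also flag a convention point: with the curvature sign $R(X,Y)Z=\nabla_X\nabla_YZ-\nabla_Y\nabla_XZ-\nabla_{[X,Y]}Z$ the computation literally returns $-P=-4R$, so obtaining the stated $+4R(X,Y)Z$ amounts to fixing the opposite sign convention for $R$; either way the magnitude and the structure of the argument are as above.
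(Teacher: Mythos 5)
Your proof is correct, and it reaches the same intermediate object as the paper but closes the argument by a genuinely different route. The paper also reduces to the difference tensor $L=\nabla^{*}-\nabla$ (using $d_{\mathrm{KV}}\nabla=0$ from Proposition \ref{proposition_4_1}), but it then expands $d_{\mathrm{KV}}L$ directly, using flatness of \emph{both} connections in the form $[\nabla^{*}_X,\nabla^{*}_Y]=\nabla^{*}_{[X,Y]}$ and $[\nabla_X,\nabla_Y]=\nabla_{[X,Y]}$, to arrive at the purely algebraic expression $\theta(Y,\theta(X,Z))-\theta(X,\theta(Y,Z))$, and at that point it simply cites \cite{9} for the identification with $4R(X,Y)Z$. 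You instead stop at the covariant-derivative form $(\nabla_YL)(X,Z)-(\nabla_XL)(Y,Z)$ --- which is exactly Theorem \ref{theorem_4_2}$(iii)$ applied to $L$ --- and then prove the identification yourself by applying the curvature-comparison identity twice, with $S=L$ (giving $P+Q=0$) and with $S=\tfrac12 L$ (giving $R=\tfrac12P+\tfrac14Q$). Note that your relation $P=-Q$ is precisely the statement that your intermediate expression and the paper's are equal up to sign, so the two proofs are dual to each other across that identity; what yours buys is self-containedness and a transparent accounting of where the factor $4$ and the two flatness hypotheses enter, at the cost of having to verify the comparison identity (which you state correctly for symmetric $S$ over a torsion-free $\nabla$). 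Your closing remark on the sign is also warranted and should not be suppressed: with the convention $R(X,Y)Z=[\nabla_X,\nabla_Y]Z-\nabla_{[X,Y]}Z$ the computation returns $-4R(X,Y)Z$, the paper never fixes its curvature convention explicitly but inherits it from \cite{9}, and the paper's own displayed derivation contains a couple of compensating sign slips (e.g.\ the line $-[\nabla_X(\theta(Y,Z))+\nabla_Y(\theta(X,Z))]$, where the inner sign should be a minus), so your version is if anything the more carefully bookkept of the two.
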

\begin{proof}
    Define the difference tensor $\theta:=\nabla^*-\nabla$. Computation yields that
    $$
    \begin{aligned}
    \nabla_X(\theta(Y, Z))-\nabla_Y(\theta(X, Z))= & \left(\nabla_X \nabla_Y^* Z-\nabla_X \nabla_Y Z+\nabla_X^* \nabla_Y^* Z-\nabla_X^* \nabla_Y^* Z\right) \\
    & -\left(\nabla_Y \nabla_X^* Z-\nabla_Y \nabla_X Z+\nabla_Y^* \nabla_X^* Z-\nabla_Y^* \nabla_X^* Z\right) \\
    = & \nabla_{[X, Y]}^* Z-\nabla_{[X, Y]} Z-\theta\left(X, \nabla_Y^* Z\right)+\theta\left(Y, \nabla_X^* Z\right) \\
    = & \theta([X, Y], Z)+\left[\theta\left(Y, \nabla_X^* Z\right)-\theta\left(X, \nabla_Y^* Z\right)\right]
    \end{aligned}
    $$
    holds for all $X, Y, Z \in \Gamma(T M)$. Therefore for any $X, Y, Z \in \Gamma(T M)$ we have that
    $$
    \begin{aligned}
    \left(d_{\mathrm{KV}} \theta\right)(X, Y, Z)= & -\nabla_X(\theta(Y, Z))+\theta\left(\nabla_X Y, Z\right)+\theta\left(\nabla_X Z, Y\right) \\
    & +\nabla_Y(\theta(X, Z))-\theta\left(\nabla_Y X, Z\right)-\theta\left(\nabla_Y Z, X\right) \\
    = & \theta([X, Y], Z)+\left[\theta\left(Y, \nabla_X Z\right)-\theta\left(X, \nabla_Y Z\right)\right] \\
    & -\left[\nabla_X(\theta(Y, Z))+\nabla_Y(\theta(X, Z))\right] \\
    = & \theta(Y, \theta(X, Z))-\theta(X, \theta(Y, Z)) \\
    = & 4 R(X, Y) Z
    \end{aligned}
    $$
    by \cite{9}. Since by Proposition \ref{proposition_4_1} $d_{\mathrm{KV}} \nabla=0$, we have that $d_{\mathrm{KV}} \nabla^*=d_{\mathrm{KV}}\left(\nabla^*-\nabla\right)=d_{\mathrm{KV}} \theta$. This concludes the proof.
\end{proof}
\begin{corollary}\label{corollary_5_4}
    Suppose that $g$ is Codazzi-coupled with $\nabla$. Then $d_{\mathrm{KV}}R=0$.
\end{corollary}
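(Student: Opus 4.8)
The plan is to observe that Corollary \ref{corollary_5_4} is an immediate formal consequence of Theorem \ref{theorem_5_3} together with the identity $d_{\mathrm{KV}}\circ d_{\mathrm{KV}}=0$ from Lemma \ref{lemma_2_6}. First I would read Theorem \ref{theorem_5_3} not merely as a pointwise identity but as an equality of KV $3$-cochains: since $R$ is a genuine $(1,3)$-tensor it is a legitimate element of $C^3(A)=\operatorname{Hom}_{\mathbb{R}}(\Gamma(TM)^{\otimes 3},\Gamma(TM))$, and likewise $d_{\mathrm{KV}}\nabla^{*}\in C^3(A)$ since $\nabla^{*}\in C^2(A)$. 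The statement $(d_{\mathrm{KV}}\nabla^{*})(X,Y,Z)=4R(X,Y)Z$ holding for all $X,Y,Z\in\Gamma(TM)$ therefore says precisely that $d_{\mathrm{KV}}\nabla^{*}=4R$ as elements of $C^3(A)$.

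Next I would apply the degree $+1$ endomorphism $d_{\mathrm{KV}}$ to both sides of this equation. On the left-hand side Lemma \ref{lemma_2_6} gives $d_{\mathrm{KV}}(d_{\mathrm{KV}}\nabla^{*})=0$, while on the right-hand side linearity of $d_{\mathrm{KV}}$ yields $4\,d_{\mathrm{KV}}R$. Equating the two gives $4\,d_{\mathrm{KV}}R=0$, and hence $d_{\mathrm{KV}}R=0$. Equivalently, Theorem \ref{theorem_5_3} exhibits $R=\tfrac14\,d_{\mathrm{KV}}\nabla^{*}$ as a KV coboundary, and every coboundary is automatically a cocycle because $d_{\mathrm{KV}}$ squares to zero.

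There is no substantive obstacle here: the only point requiring a moment's care is the passage from the pointwise curvature identity to an identity of cochains, which is unproblematic because both $d_{\mathrm{KV}}\nabla^{*}$ and $R$ genuinely lie in $C^3(A)$. The Codazzi-coupling hypothesis enters only through its role in establishing Theorem \ref{theorem_5_3} (and in guaranteeing, via Lemma \ref{lemma_5_2}, that $\nabla^{*}$ is a flat torsion-free connection so that the framework applies); once that theorem is in hand, the vanishing of $d_{\mathrm{KV}}R$ is purely formal and requires no further geometric input.
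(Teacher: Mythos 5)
Your argument is exactly the paper's: Theorem \ref{theorem_5_3} identifies $R=\tfrac14\,d_{\mathrm{KV}}\nabla^{*}$ as a KV coboundary, and Lemma \ref{lemma_2_6} then gives $d_{\mathrm{KV}}R=\tfrac14\,d_{\mathrm{KV}}d_{\mathrm{KV}}\nabla^{*}=0$. The proposal is correct and takes essentially the same route, with the (harmless and correct) extra remark that the pointwise identity upgrades to an equality in $C^3(A)$.
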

\begin{proof}
    By Theorem \ref{theorem_5_3}, $R=d_{\mathrm{KV}} \frac{1}{4}\nabla^*$. By Lemma \ref{lemma_2_6}, $d_{\mathrm{KV}} R=\frac{1}{4}d_{\mathrm{KV}} d_{\mathrm{KV}} \nabla^*=0$.
\end{proof}

\section{An Example of Non-Vanishing KV Cohomology Group}

In this section, we construct a non-trivial example of second KV cohomology group. For simplicity, we consider the two dimensional case. Our procedure consists of two steps. First, we study conditions of the vanishing of KV differential $d_{\mathrm{KV}}\theta$ for a KV 2-chain $\theta$ that results from the conformal transformation of a flat manifold. Second, we construct an explicit example of which the above $\theta$ is not in the image of the KV differential $d_{\mathrm{KV}}$.

\begin{theorem}\label{theorem_5_5}
    Suppose that $(M,g)$ is a two dimensional flat Riemannian manifold with Levi-Civita connection $\nabla$.
    Let $f \in C^{\infty}(M)$ and let $\theta \in C^2(A)$ be the $\mathrm{KV}$ $2$-cochain satisfying 
    $$\theta(X, Y)=-g(X, Y) \operatorname{grad} f+\{(X f) Y+(Y f) X\}$$
    for all $X, Y \in \Gamma(T M)$. Then the Levi-Civita connection of $\left(M, e^{2 f} g\right)$ is $\nabla+\theta$, and the following statements are equivalent:
    \begin{enumerate}
        \item[$(i)$] $d_{\mathrm{KV}} \theta=0$;
        \item[$(ii)$] $\nabla+\theta$ is flat;
        \item[$(iii)$] $f$ is a harmonic function on $(M, g)$.
    \end{enumerate}
\end{theorem}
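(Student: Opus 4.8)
The plan is to first identify $\nabla+\theta$ with the Levi-Civita connection of the conformally rescaled metric $\tilde g := e^{2f}g$, and then to run all three equivalences through a single explicit computation of $d_{\mathrm{KV}}\theta$ in terms of the Hessian of $f$. For the first assertion I would recall the classical behaviour of the Levi-Civita connection under a conformal change. The connection $\tilde\nabla_X Y := \nabla_X Y + (Xf)Y + (Yf)X - g(X,Y)\operatorname{grad} f = \nabla_X Y + \theta(X,Y)$ is torsion-free, since $\theta$ is manifestly symmetric and $\nabla$ is torsion-free, and it is compatible with $\tilde g$. The latter is a direct verification from $X\tilde g(Y,Z) = e^{2f}\bigl(2(Xf)g(Y,Z) + Xg(Y,Z)\bigr)$ together with $\nabla g = 0$; uniqueness of the Levi-Civita connection then forces $\tilde\nabla = \nabla+\theta$.

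For the equivalences the key object is $d_{\mathrm{KV}}\theta$. Since $\theta$ is symmetric, Theorem~\ref{theorem_4_2}$(iii)$ gives $(d_{\mathrm{KV}}\theta)(X,Y,Z) = (\nabla_Y\theta)(X,Z) - (\nabla_X\theta)(Y,Z)$, so I would first compute $(\nabla_X\theta)(Y,Z)$. Using $\nabla g = 0$ and writing $\operatorname{Hess} f(X,Y) = X(Yf) - (\nabla_X Y)f = g(\nabla_X\operatorname{grad} f, Y)$, the derivatives of the $g$-term and of the product terms telescope, leaving $(\nabla_X\theta)(Y,Z) = -g(Y,Z)\nabla_X\operatorname{grad} f + \operatorname{Hess} f(X,Y)Z + \operatorname{Hess} f(X,Z)Y$. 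Antisymmetrising in $X,Y$ and using symmetry of the Hessian yields $(d_{\mathrm{KV}}\theta)(X,Y,Z) = g(Y,Z)\nabla_X\operatorname{grad} f - g(X,Z)\nabla_Y\operatorname{grad} f + \operatorname{Hess} f(Y,Z)X - \operatorname{Hess} f(X,Z)Y$. To read off when this vanishes I would pass to a local parallel orthonormal frame $e_1,e_2$ (available because $g$ is flat), set $H_{ij} := \operatorname{Hess} f(e_i,e_j)$, and evaluate on $(e_1,e_2,e_k)$; only the trace $H_{11}+H_{22} = \Delta f$ survives, so $d_{\mathrm{KV}}\theta = 0$ iff $f$ is harmonic. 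This gives $(i)\Leftrightarrow(iii)$.

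For $(i)\Leftrightarrow(ii)$ I would relate $d_{\mathrm{KV}}\theta$ to the curvature $\tilde R$ of $\tilde\nabla$. The standard formula for the curvature of a connection differing from the flat $\nabla$ by a symmetric $(1,2)$-tensor reads $\tilde R(X,Y)Z = (\nabla_X\theta)(Y,Z) - (\nabla_Y\theta)(X,Z) + \theta(X,\theta(Y,Z)) - \theta(Y,\theta(X,Z))$, whose linear part is exactly $-d_{\mathrm{KV}}\theta$. The hard part, and the only place where $\dim M = 2$ is essential, is to show that the quadratic correction $Q(X,Y)Z := \theta(X,\theta(Y,Z)) - \theta(Y,\theta(X,Z))$ vanishes identically in two dimensions. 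I would verify this in the parallel frame using the identity $\theta(e_i,\operatorname{grad} f) = |\operatorname{grad} f|^2 e_i$ and the two-dimensional relation $|\operatorname{grad} f|^2 = f_1^2 + f_2^2$, which makes every term of $Q(e_1,e_2)e_k$ cancel; this cancellation is precisely what fails once $\dim M > 2$. Granting $Q \equiv 0$ we obtain $\tilde R = -d_{\mathrm{KV}}\theta$, whence $(i)\Leftrightarrow(ii)$.

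As an independent cross-check, and to avoid relying on the quadratic cancellation, I would also note that $(ii)\Leftrightarrow(iii)$ follows from the classical two-dimensional conformal transformation law $\tilde K = e^{-2f}(K - \Delta_g f)$ for Gauss curvature: with $K = 0$ the Levi-Civita connection $\tilde\nabla$ of $\tilde g$ is flat iff $\tilde K = 0$ iff $\Delta_g f = 0$. I expect the main obstacle overall to be the verification that $Q \equiv 0$ in the computational route; the Gauss-curvature identity serves as a conceptual confirmation and a guard against sign errors in the Hessian bookkeeping.
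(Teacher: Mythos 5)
Your proposal is correct, and its load-bearing parts coincide with the paper's own proof: the first assertion is the standard conformal transformation of the Levi-Civita connection (the paper invokes Koszul's formula, you verify compatibility and torsion-freeness directly); the equivalence $(ii)\Leftrightarrow(iii)$ is obtained exactly as in the paper from the two-dimensional conformal curvature law with $K=0$; and $(i)\Leftrightarrow(iii)$ is obtained by the same computation, namely $(\nabla_X\theta)(Y,Z)=-g(Y,Z)\nabla_X\operatorname{grad}f+\operatorname{Hess}f(X,Y)Z+\operatorname{Hess}f(X,Z)Y$, antisymmetrization via Theorem \ref{theorem_4_2}$(iii)$, and evaluation on an orthonormal frame, where only the trace $\Delta f$ survives. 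The one genuine addition is your direct link $(i)\Leftrightarrow(ii)$ through the decomposition $\tilde R=-d_{\mathrm{KV}}\theta+Q$ with $Q(X,Y)Z=\theta(X,\theta(Y,Z))-\theta(Y,\theta(X,Z))$; your claim that $Q\equiv 0$ in dimension two is correct (writing $G=\operatorname{grad}f$, one finds $Q(X,Y)Z=\abs{G}^2\bigl[g(X,Z)Y-g(Y,Z)X\bigr]+\bigl[(Xf)g(Y,Z)-(Yf)g(X,Z)\bigr]G+(Zf)\bigl[(Yf)X-(Xf)Y\bigr]$, which vanishes on a two-dimensional orthonormal frame but not in higher dimension). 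This buys a conceptual explanation of why $(i)$ and $(ii)$ agree, namely that the KV differential of $\theta$ is exactly minus the curvature of $\nabla+\theta$ in two dimensions, at the cost of an extra algebraic verification; the paper avoids it by routing both $(i)$ and $(ii)$ through $(iii)$, so your $Q\equiv 0$ computation is logically redundant but a worthwhile cross-check.
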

\begin{proof}
    Denote by $\Delta$ the Laplace-Beltrami operator on $(M, g)$ and denote $\tilde{g}:=e^{2 f} g$. The fact that the Levi-Civita connection of $(M, \tilde{g})$ is $\nabla+\theta$ follows from Koszul's formula. According to \cite{10}, the Riemann curvature tensor of $(M, \tilde{g})$ is 
    $$e^{2 f}(K+\Delta f) \tilde{g} \owedge \tilde{g}$$
    where $K$ is the Gaussian curvature of $(M, g)$. Since $\nabla$ is flat, we have that $K=0$. Therefore $\nabla+\theta$ is flat if and only if $\Delta f=0$. This proves that $(ii)$ is equivalent to $(iii)$.

    We shall now prove that $(i)$ is also equivalent to $(iii)$. For each $1$-form $\omega \in \Omega^{1}(M)$, let $\omega^{\#} \in \Gamma(T M)$ be the vector field satisfying $\omega(X)=g\left(\omega^{\#}, X\right)$ for all $X \in \Gamma(T M)$. Recall that for any $X \in \Gamma(T M)$ we have $\left(\nabla_X d f\right)^{\#}=\nabla_X \operatorname{grad} f$. Since $\nabla$ is compatible with $g$, the equality
    $$
    \begin{aligned}
    \left(d_{\mathrm{KV}} \theta\right)(X, Y, Z)= & \left[\left(\nabla_X g\right)(Y, Z)-\left(\nabla_Y g\right)(X, Z)\right] \operatorname{grad}f+g(Y, Z) \nabla_X \operatorname{grad}f-g(X, Z) \nabla_Y \operatorname{grad}f \\
    & +\left(\nabla_Y d f\right)(X) Z+\left(\nabla_Y d f\right)(Z) X-\left(\nabla_X d f\right)(Y) Z-\left(\nabla_X d f\right)(Z) Y\\ 
    =&\left(\nabla_Y d f\right)(Z) X-\left(\nabla_X d f\right)(Z) Y+\left[\left(\nabla_Y d f\right)(X)-\left(\nabla_X d f\right)(Y)\right] Z\\ 
    &+g(Y, Z)\left(\nabla_X d f\right)^{\#}-g(X, Z)\left(\nabla_Y d f\right)^{\#}
    \end{aligned}
    $$
    holds for all $X, Y, Z \in \Gamma(T M)$. By passing to trivializing neighbourhood if necessary, we can assume w.l.o.g. that there exists $e_1, e_2 \in \Gamma\left(T M\right)$ such that $g\left(e_i, e_j\right)=\delta_{i j}$ for all $i, j \in\{1,2\}$.

    Take distinct $i, j \in\{1,2\}$. By observation $\left(d_{\mathrm{KV}} \theta\right)\left(e_i, e_i, e_i\right)=\left(d_{\mathrm{KV}} \theta\right)\left(e_i, e_i, e_j\right)=0$. Since the Hessian $\nabla d f$ is symmetric, we obtain 
    $$g\left(\left(d_{\mathrm{KV}} \theta\right)\left(e_i, e_j, e_i\right), e_i\right)=2\left(\nabla_{e_j} d f\right)\left(e_i\right)-\left(\nabla_{e_i} d f\right)\left(e_j\right)-\left(\nabla_{e_j} d f\right)\left(e_i\right)=0.$$
    Also direct computation yields that $$g\left(\left(d_{\mathrm{KV}} \theta\right)\left(e_i, e_j, e_i\right), e_j\right)=-\left(\nabla_{e_i} d f\right)\left(e_i\right)-\left(\nabla_{e_j} d f\right)\left(e_j\right)=-\Delta f.$$
    Similarly we have $g\left(\left(d_{\mathrm{KV}} \theta\right)\left(e_i, e_j, e_j\right), e_j\right)=0$ and $g\left(\left(d_{\mathrm{KV}} \theta\right)\left(e_i, e_j, e_j\right), e_i\right)=\Delta f$. Therefore $d_{\mathrm{KV}} \theta=0$ if and only if $\Delta f=0$.
\end{proof}

Using Theorem \ref{theorem_5_5}, we now can construct an explicit example of which second KV cohomology does not vanish.

\begin{example}
Let $M:=\left\{(x, y) \in \mathbb{R}^2 \mid x^2+y^2 \neq 0\right\}$ be the punctured plane, and denote by $\Gamma(T M)$ the $\mathbb{R}$-vector space of $C^{\infty}$ vector fields on $M$. Let $g=d x \otimes d x+d y \otimes d y$ be the standard Euclidean metric on $M$, and denote by $\nabla$ the Levi-Civita connection of $(M, g)$. Then $\nabla$ is flat and torsion free. Denote by $A:=(\Gamma(T M), \nabla)$ the $\mathrm{KV}$ algebra of $\nabla$, and denote by $\left(\bigoplus_{i=0}^{\infty} C^i(A), d_{\mathrm{KV}}\right)$ its $\mathrm{KV}$ cochain complex. Consider the smooth function:
$$
f: M \longrightarrow \mathbb{R}, \quad(x, y) \longmapsto \frac{1}{2} \ln \left(x^2+y^2\right) .
$$
and consider the $\mathrm{KV}$ $2$-cochain $\theta \in C^2(A)$ satisfying $$\theta(X, Y)=-g(X, Y)\operatorname{grad}f+(X f) Y+(Y f) X$$ for all $X, Y \in \Gamma(T M)$. 
Since $f$ is a harmonic function on $(M, g)$, by Theorem \ref{theorem_5_5}, we have that $d_{\mathrm{KV}} \theta=0$. 

{\bf Claim}. The $\mathrm{KV}$ $2$-cochain $\theta$ is not an element of $\operatorname{Im}\left(d_{\mathrm{KV}}\right)$.

\begin{proof}
Assume the contrary, then by Theorem \ref{theorem_4_2} there exists $Z \in \Gamma(T M)$ such that the equation
\begin{equation}\tag{$*$}
    \nabla_{\nabla_X Y} Z-\nabla_X \nabla_Y Z=\theta(X, Y)
\end{equation}
holds for all $X, Y \in \Gamma(T M)$. Take $u, v \in C^{\infty}(M)$ such that $Z=u \frac{\partial}{\partial x}+v \frac{\partial}{\partial y}$. Then equation ($*$) implies
$$
\begin{aligned}
& -u_{x x} \frac{\partial}{\partial x}-v_{x x} \frac{\partial}{\partial y}=f_y \frac{\partial}{\partial y}-f_x \frac{\partial}{\partial x} \\
& -u_{x y} \frac{\partial}{\partial x}-v_{x y} \frac{\partial}{\partial y}=f_y \frac{\partial}{\partial x}+f_x \frac{\partial}{\partial y} \\
& -u_{y y} \frac{\partial}{\partial x}-v_{y y} \frac{\partial}{\partial y}=f_x \frac{\partial}{\partial x}-f_y \frac{\partial}{\partial y}
\end{aligned}
$$
and in particular, $u$ satisfies the system of partial differential equations on $M$:
$$
-\left(x^2+y^2\right) \operatorname{Hess}(u)=\left[\begin{array}{cc}
x & y \\
y & -x
\end{array}\right] .
$$
 Consider open subset $\Omega:=\left\{(x, y) \in \mathbb{R}^2 \mid y \neq 0\right\}$ of $M$. Direct computation yields that there exists $a, b, c \in \mathbb{R}$ such that 
$$u(x, y)=\dfrac{x}{2} \ln \left(x^2+y^2\right)+\arctan \left(\dfrac{x}{y}\right)y +a x+b y+c$$
for all $(x,y)\in\Omega$. In particular $u(2,t)=\operatorname{ln}(t^2+4)+\operatorname{arctan}(2/t)t+b\cdot t+(2a+c)$ for all $t\in \mathbb{R}^\times$. Differentiation yields that $u_y(2,t)=\operatorname{arctan}(2/t)+b$ for all $t\in \mathbb{R}^\times$. Since $u_y(2,t)\rightarrow \frac{\pi}{2}+b$ as $t\rightarrow 0+$ and $u_y(2,t)\rightarrow -\frac{\pi}{2}+b$ as $t\rightarrow 0-$, we obtain that the function 
$$\begin{aligned}
\mathbb{R}^\times & \longrightarrow \mathbb{R} \\
t & \longmapsto u_y(2,t)
\end{aligned}$$
cannot be extended continuously to $\mathbb{R}$. Therefore $u|_\Omega$ cannot be extended to a smooth function defined on $M$, contradiction. This proves the claim.
\end{proof}

That $d_{\mathrm{KV}} \theta=0$ but $\theta \notin \operatorname{Im}\left(d_{\mathrm{KV}}\right)$ means that
the second cohomology group of $\left(\bigoplus_{i=0}^{\infty} C^i(A), d_{\mathrm{KV}}\right)$ does not vanish.
\end{example}

\section*{Appendix}

Let $\pi\colon E \rightarrow M$ be a smooth $\mathbb{R}$-vector bundle over a differentiable manifold $M$. Denote by $\Gamma(E)$ the space of $C^{\infty}$ sections of $\pi$, and $\Gamma(\text{End} E)$ the space of $C^{\infty}$ sections of the endomorphism bundle of $\pi$. For each $k \in \mathbb{N}$, denote by $\Omega^k\left(M; E\right):=\Omega^k(M) \otimes \Gamma(E)$ the $C^{\infty}(M)$-module of vector-valued $k$-forms on $M$. Let $\nabla$ be a connection on $E \xrightarrow{\pi} M$, i.e. $\nabla\colon \Gamma(E) \rightarrow \Omega^1(M) \otimes \Gamma(E)$ is an $\mathbb{R}$-linear transform such that the Leibniz rule
$$
\nabla(f \cdot s)=d f \otimes s+f \nabla s
$$
holds for all $f \in C^{\infty}(M)$ and $s \in \Gamma(E)$. For a given smooth vector field $X$ on $M$, we have the mapping
$$
\begin{aligned}
\nabla_X\colon \Gamma(E) & \longrightarrow \Gamma(E) \\
s & \longmapsto(\nabla s)(X)
\end{aligned}
$$
induced from $\nabla$. The differential operator $\nabla$ applicable to $\Gamma(E)$ can be extended to a map
$$
d^{\nabla}\colon \Omega^k(M; E) \rightarrow \Omega^{k+1}(M ; E)
$$
for every $k \in \mathbb{N}$, via the formula
$$
d^{\nabla}(\omega \otimes s)=d \omega \otimes s+(-1)^k \omega \wedge \nabla s
$$
for any $\omega \in \Omega^k(M)$ and $s \in \Gamma(E)$.

Straightforward computation yields that
$$
\begin{aligned}
    \left(d^{\nabla} \theta\right)\left(X_0, \ldots, X_k\right)=&\sum\limits_{i=0}^k(-1)^i \nabla_{X_i}\left(\theta\left(X_0, \ldots, \hat{X}_i, \ldots, X_k\right)\right) \\ 
    & +\sum\limits_{0 \leqslant i<j \leqslant k}(-1)^{i+j} \theta\left(\left[X_i, X_j\right], X_0, \ldots, \hat{X}_i, \ldots, \hat{X}_j, \ldots, X_k\right)
\end{aligned}
$$
for all $\theta \in \Omega^k(M; E)$ and all smooth vector fields $X_0, \ldots, X_k$ on $M$, where the hats on $\hat{X}_i$ and $\hat{X}_j$ indicate that the $X_i$ and $X_j$ terms are omitted. In particular, for $k=2$ the equation above simplifies to
$$
\left(d^{\nabla} \theta\right)(X, Y)=\nabla_X(\theta(Y))-\nabla_Y(\theta(X))-\theta([X, Y])
$$
for all $\theta \in \Omega^1(M; E)$ and all smooth vector fields $X, Y$ on $M$.

By the general theory of connections \cite{10}, there exists $R^{\nabla} \in \Omega^2(M) \otimes \Gamma(\text{End}E)$ such that $d^{\nabla}\left(d^{\nabla} \theta\right)=R^{\nabla} \wedge \theta$ for all $k \in \mathbb{N}$ and $\theta \in \Omega^k(M; E)$. The matrix-valued 2-form $R^{\nabla}$ is termed the curvature of $\nabla$. To compute $R^{\nabla}$ explicitly, it suffices to notice that
$$
\begin{aligned}
R^{\nabla}(X, Y) s & =\left(d^{\nabla}\left(d^{\nabla} s\right)\right)(X, Y) \\
& =(d^{\nabla}(\nabla s))(X, Y) \\
& =\nabla_X((\nabla s)(Y))-\nabla_Y((\nabla s)(X))-(\nabla s)([X, Y]) \\
& =\nabla_X\left(\nabla_Y s\right)-\nabla_Y\left(\nabla_X s\right)-\nabla_{[X, Y]} s
\end{aligned}
$$
holds for all $s \in \Gamma(E)$ and all smooth vector fields $X, Y$ on $M$. Therefore
$$
R^{\nabla}(X, Y)=\left[\nabla_X, \nabla_Y\right]-\nabla_{[X, Y]}
$$
for all smooth vector fields $X, Y$ on $M$.

The connection $\nabla$ is said to be flat, if its curvature $R^{\nabla}$ vanishes identically. If $\nabla$ is flat, then $d^{\nabla} \circ d^{\nabla}=0$, and hence $\left(\bigoplus_{i=0}^{\infty} \Omega^i(M; E), d^{\nabla}\right)$ is a differential graded $\mathbb{R}$-vector space, termed the de Rham complex of $M$ twisted by $\nabla$.

From now on, we shall consider only the case that $\pi\colon E \rightarrow M$ is the tangent bundle of $M$, and $\nabla$ is flat and torsion-free. As usual, we denote by $A$ the $\mathrm{KV}$ algebra of $\nabla$, and denote by $\left(\bigoplus_{i=0}^{\infty} C^i(A), d_{\mathrm{KV}}\right)$ its $\mathrm{KV}$ cochin complex. Since $\Omega^k(M; E) \subseteq C^k(A)$ for all $k \in \mathbb{N}$, it is natural to compare the cohomology groups of $\left(\bigoplus_{i=0}^{\infty} \Omega^i(M; E), d^{\nabla}\right)$ and $\left(\bigoplus_{i=0}^{\infty} C^i(A), d_{\mathrm{KV}}\right)$. However, it turns out that in general the cohomology of $\left(\bigoplus_{i=0}^{\infty} \Omega^i(M ; E), d^{\nabla}\right)$ cannot be embedded into the cohomology of $\left(\bigoplus_{i=0}^{\infty} C^i(A), d_{\mathrm{KV}}\right)$ as a graded abelian group, and vice versa.

An example can be constructed as follows:

Let $M:=\left\{(x, y) \in \mathbb{R}^2 \mid x^2+y^2 \neq 0\right\}$ be the punctured plane. Since the tangent bundle $TM \rightarrow M$ is trivial, we have that $\Omega^k\left(M; TM\right)=\Omega^k(M) \oplus \Omega^k(M)$ for all $k \in \mathbb{N}$. Let $g=d x \otimes d x+d y \otimes d y$ be the standard Euclidean metric on $M$, and let $\nabla$ be the Levi-Civita connection of $(M, g)$. Then $d^{\nabla}=d \oplus d$, and hence the cohomology of $\left(\bigoplus_{i=0}^{\infty} \Omega^i(M; TM), d^{\nabla}\right)$ is $\bigoplus_{i=0}^{\infty} H^i(M; \mathbb{R})^{\oplus 2}$. Since $M$ is homotopic to the unit circle $\mathbb{S}^1$, we conclude that $H^0(M; \mathbb{R})=\mathbb{R}$ and $H^2(M; \mathbb{R})=0$. Therefore the second cohomology of $\left(\bigoplus_{i=0}^{\infty} \Omega^i(M; TM), d^{\nabla}\right)$ vanishes and the zeroth cohomology of $\left(\bigoplus_{i=0}^{\infty} \Omega^i(M; TM), d^{\nabla}\right)$ does not vanish.

Let $A$ be the $\mathrm{KV}$ algebra of $\nabla$, and $\left(\bigoplus_{i=0}^{\infty} C^i(A), d_{\mathrm{KV}}\right)$ its $\mathrm{KV}$ cohain complex. By Example 6.2 we have that the second cohomology of $\left(\bigoplus_{i=0}^{\infty} C^i(A), d_{\mathrm{KV}}\right)$ does not vanish. We claim that the zeroth cohomology of $\left(\bigoplus_{i=0}^{\infty} C^i(A), d_{\mathrm{KV}}\right)$ vanishes. Indeed, it suffices to prove the following lemma:

\begin{lemma}
    Let $\Omega$ be a domain in $\mathbb{R}^n$, and let $X$ be a smooth vector field on $\Omega$. If $[X, Y]=0$ for every smooth vector field $Y$ on $\Omega$, then $X=0$.
\end{lemma}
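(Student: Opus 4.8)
The plan is to work in the global coordinates $(x^1,\ldots,x^n)$ afforded by the inclusion $\Omega\subseteq\mathbb{R}^n$ and to test the hypothesis against two cleverly chosen families of vector fields. Writing $X=\sum_{i=1}^n a^i\,\partial_i$ with $a^i\in C^\infty(\Omega)$, the underlying idea is that the commutator of $X$ with a constant-coefficient field detects the derivatives of the coefficients $a^i$, while the commutator of $X$ with a suitable linear field detects the coefficients themselves.

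First I would test against the coordinate fields $Y=\partial_k$. A direct computation from $[X,Y]f=X(Yf)-Y(Xf)$ gives $[X,\partial_k]=-\sum_{j}(\partial_k a^j)\,\partial_j$, so the hypothesis forces $\partial_k a^j=0$ for all $j,k$. Since $\Omega$ is a domain, hence connected, each coefficient $a^j$ is therefore a constant function on $\Omega$.

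Second, I would exploit the Leibniz identity $[X,fZ]=(Xf)\,Z+f\,[X,Z]$, valid for any $f\in C^\infty(\Omega)$ and any vector field $Z$. Fixing an index $i$ and taking $f=x^i$, $Z=\partial_i$, the first step gives $[X,\partial_i]=0$, whence $[X,x^i\partial_i]=(X x^i)\,\partial_i=a^i\,\partial_i$. The hypothesis applied to the smooth field $Y=x^i\partial_i$ then yields $a^i=0$, and since $i$ was arbitrary we conclude $X=0$.

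The argument is elementary and presents no genuine obstacle; the only point requiring a little care is its logical structure, namely that neither family of test fields suffices on its own. The constant fields $\partial_k$ only certify that the coefficients are locally constant, and only once that information is in hand does the commutator with $x^i\partial_i$ collapse to the clean expression $a^i\partial_i$ whose vanishing kills the remaining constants. Connectedness of the domain is what upgrades \emph{locally constant} to \emph{constant}, and is the sole place where the hypothesis that $\Omega$ is a domain, rather than merely an open set, is used.
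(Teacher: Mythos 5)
Your proof is correct and follows essentially the same two-step route as the paper: first commute with the coordinate fields $\partial_k$ to force the coefficients to be constant, then commute with a linear field to kill the constants (the paper uses the single Euler field $x^j\partial_j$ summed over $j$, while you use each $x^i\partial_i$ separately, a purely cosmetic difference). The only quibble is your closing remark: connectedness is not actually needed, since once the coefficients are locally constant the identity $[X,x^i\partial_i]=a^i\partial_i=0$ kills $a^i$ pointwise.
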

\begin{proof}
    Let $x^1, \ldots, x^n$ be the standard coordinate system on $\Omega$. Then there exists $X^1, \ldots, X^n \in C^{\infty}(\Omega)$ such that $X=X^i \frac{\partial}{\partial x^i}$. Since
    $$
    \dfrac{\partial X^i}{\partial x^j} \dfrac{\partial}{\partial x^i}=\left[X^i \dfrac{\partial}{\partial x^i},-\dfrac{\partial}{\partial x^j}\right]=0
    $$
    holds for all $j \in\{1, \ldots, n\}$, we have that $X^1, \ldots, X^n$ are constant functions. Therefore
    $$
    \begin{aligned}
    X & =X^i \dfrac{\partial}{\partial x^i} \\
    & =X^i \dfrac{\partial x^j}{\partial x^i} \dfrac{\partial}{\partial x^j} \\
    & =X^i\left[\dfrac{\partial}{\partial x^i}, x^j \dfrac{\partial}{\partial x^j}\right] \\
    & =\left[X^i\dfrac{\partial}{\partial x^i}, x^j \dfrac{\partial}{\partial x^j}\right] \\
    & =0
    \end{aligned}
    $$
    as desired.
\end{proof}

The discussion above shows that in general KV cohomology and de Rham cohomology (twisted
by a local system) are very different from each other.

\quad

\section*{Statements and Declarations}
The authors did not receive support from organizations for the submitted work. All authors certify that they have no affiliations with or involvement in any organization or entity with any financial interest or non-financial interest in the subject matter or materials discussed in this manuscript.

\quad

\vspace{50pt}

Hanwen Liu

\noindent \textsc{Mathematics Institute, University of Warwick, Coventry, CV4 7AL, UK} 

\noindent \textit{E-mail address:} \href{mailto:hanwen.liu@warwick.ac.uk}{\nolinkurl{hanwen.liu@warwick.ac.uk}}
\bigskip

\quad

Jun Zhang

\noindent \textsc{University of Michigan, Ann Arbor, MI 48109, USA} 

\noindent \textit{E-mail address:} \href{mailto:junz@umich.edu}{\nolinkurl{junz@umich.edu}}
\bigskip

\noindent \textsc{Shanghai Institute for Mathematics and Interdisciplinary Sciences, Shanghai, China} 

\noindent \textit{E-mail address:} \href{mailto:junz@simis.cn}{\nolinkurl{junz@simis.cn}}
\bigskip

\end{document}